\newcommand{\arc}[1]{{%
  \setbox9=\hbox{#1}%
  \ooalign{\resizebox{\wd9}{\height}{\texttoptiebar{\phantom{A}}}\cr#1}}}
\numberwithin{equation}{section}
\newtheorem{theorem}[equation]{Theorem}
\newtheorem{proposition}[equation]{Proposition}
\newtheorem{lemma}[equation]{Lemma}
\newtheorem{corollary}[equation]{Corollary}
\newtheorem{conjecture}{Conjecture}
\newtheorem*{open}{Open Question}
\theoremstyle{definition}
\newtheorem{remark}[equation]{Remark}
\newcommand{\inj}{{\rm inj}}
\newcommand{\R}{\mathbb{R}}
\newcommand{\sS}{\mathbb{S}}
\newcommand{\A}{\mathcal{A}}
\newcommand{\N}{\mathbb{N}}
\newcommand{\pa}{\partial}
\newcommand{\intS}{\partial_I\Omega}
\newcommand{\extS}{\partial_E\Omega}
\newcommand{\Om}{\Omega}
\newcommand{\diam}{{\rm diam}}
\newcommand{\cP}{\mathcal{P}}
\newcommand{\DD}{\mathbb{D}}
\newcommand{\be}{\begin{equation*}}
\newcommand{\ee}{\end{equation*}}
\newcommand{\bff}{\begin{proof}}
\newcommand{\ef}{\end{proof}}
\newcommand{\ben}[1]{\begin{equation}\label{#1}}
\newcommand{\een}{\end{equation}}
\newcommand{\bq}{\begin{eqnarray*}}
\newcommand{\bqn}[1]{\begin{eqnarray}\label{#1}}
\newcommand{\eq}{\end{eqnarray*}}
\newcommand{\eqn}{\end{eqnarray}}
\newcommand{\etap}{\eta^\pa}
\def\sideremark#1{\ifvmode\leavevmode\fi\vadjust{\vbox to0pt{\vss
 \hbox to 0pt{\hskip\hsize\hskip1em                        
  \vbox{\hsize2cm\tiny\raggedright\pretolerance10000         
  \noindent #1\hfill}\hss}\vbox to8pt{\vfil}\vss}}}
\begin{document}
\title[]{Escobar constants of planar domains}

\author{Asma Hassannezhad}
\address
{Asma Hassannezhad: University of Bristol,
School of Mathematics,
Fry Building, Woodland Road,
Bristol BS8 1UG, UK}
\email{asma.hassannezhad@bristol.ac.uk}
\author{Anna Siffert$^{*}$}
\thanks{$^{*}$ Corresponding author}
\address
{Anna Siffert: Westf\"alische Wilhelms-Universit\"at M\"unster,
Fachbereich Mathematik und Informatik der WWU,
Einsteinstrasse 62, 48149 M\"unster, Germany}
\email{asiffert@wwu.de}
\date{\today}
\subjclass[2010]{51M16, {49Q10}, 58J50}
\keywords{Escobar constants, isoperimetric constants.}

\begin{abstract} 
We initiate the study of the higher order Escobar constants $I_k(M)$, $k\geq 3$, on bounded planar domains $M$.
The Escobar constants $I_k$ of the unit disk and a family of polygons are provided. 

\end{abstract}
\maketitle

\section{Introduction}
\label{intro}

In 1997, Escobar \cite{Esc97} introduced the isoperimetric constant $I_2(M)$ of a Riemannian manifold $(M,g)$ with non-empty boundary.
In terms of this constant, he gave a lower bound for the first non-zero eigenvalue $\sigma_1$ of the Dirichlet-to-Neumann operator.
Recently, this theory has been extended by Hassannezhad and Miclo
\cite{HM17} who introduced analogous isoperimetric constants $I_k(M)$ for any $k\in\N_+$ and used them to provide lower bounds on the higher order eigenvalues $\sigma_k$ of the Dirichlet-to-Neumann operator. 
In what follows, we call the isoperimetric constant $I_k(M)$, where $k\in\N_+$, the \textit{$k$-th Escobar constant} (of $M$).  

\bigskip

\noindent{\textbf{Escobar constants.}}
Let {$M$ be a Riemannian manifold with {Lipschitz} boundary and} $\A(M)$ denote the family of all non-empty open subsets $\Omega$ of $M$ with piecewise smooth (or more generally rectifiable)
boundary  $\partial\Omega$, i.e.
$$  \A(M)=\{\Omega\subset M\,\lvert\,\Omega\neq\emptyset, \partial\Omega\,\,\mbox{piecewise smooth}\}.$$
We use $\A$ instead of $\A(M)$ when there is no possibility of confusion.  For $\Omega\in\A$, we write the boundary of $\Omega$ as 
$\pa\Omega=\intS\cup\extS,$ 
where $$\intS:=\pa\Omega\cap M\quad\mbox{and}\quad \extS:=\pa\Omega\cap \pa M$$ are the so-called \textit{interior boundary} and 
 \textit{exterior boundary} of $\Om$ respectively. 
\begin{figure}[ht]
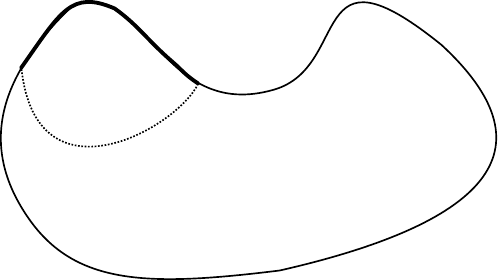
	\caption{Setting.}\label{fig1}
\end{figure}

\smallskip

For $\Omega\in \A$ let 
$\etap(\Om)$ be 
the \textit{isoperimetric ratio} given by
\[
\etap(\Om):=  \frac{|\pa\Omega\cap M|}{|\pa\Omega\cap \pa M|}= \frac{|\intS|}{|\extS|},
\]
where $|\cdot|$ denotes the Riemannian volume of the set. By convention, when $\extS=\emptyset$ then  $\etap(\Om)=\infty$. 
For any $k\in\mathbb{N}$, we introduce the $k$-th Escobar constant as
\bq 
I_k(M)\ :=\ \inf_{(\Om_1,\cdots,\Om_k)\in\A_k}\max_j\etap(\Om_j).\eq
Here, $\A_k=\A_k(M)$ is the family of all  mutually disjoint $k$-tuples $(\Om_1,\cdots,\Om_k)$ such that ${\emptyset\neq}\Om_j\in \A$, $j=1,\cdots,k$. The $k$-th Escobar constant $I_k(M)$ is a scaling invariant quantity.\\

It is easy to check that the definition of $I_2(M)$ does not change if we assume that $(\Omega_1,\Omega_2)\in \A_2$ is a partition of $M$ instead of just a disjoint $2$-tuple. Hence, the definition for $I_2$ given here coincides with the definition given by  Escobar \cite{Esc97}.
{It also has a functional characterisation: 
$$I_2(M)=\inf_{f\in H^1( M)}\frac{\int_M|\nabla f|}{\int_{\partial M} |f-f_{\rm med}|},$$
where $f_{\rm med}$ is the  median of $f$ on $\partial M$:
$$f_{\rm med}:=\sup\left\{t\in\R: \mathcal{H}^{n-1}\{x\in \partial M: f(x)> t\}|> |\partial M|/2\right\}.$$
Here, $\mathcal{H}^{n-1}(\cdot)$ denotes the $(n-1)$-Hausdorff measure of the set.  In other words, $I_2(M)$ gives the geometric description of the optimal constant $C$ in the Poincar\'e trace inequality 
\[\parallel f-f_{\rm med}\parallel_{L^{1}(\partial M)}\le C^{-1} \parallel\nabla f\parallel_{L^1(M)}.\]
See \cite{CFNT17,Esc97}  for more details. However, there is no interpretation for  $I_k$, $k\ge3$,  as the optimal constant of some functional inequalities. }
\smallskip

The $k$-th Escobar constant together with another isoperimetric ratio closely related to the $k$-th Cheeger constant $h_k$ appear in lower bounds for the $k$-th Steklov eigenvalues \cite{HM17}. This has been the primary motivation for the definition of $I_2(M)$ in  \cite{Esc97,Esc99} (see also \cite{Jam15} for a closely related isoperimetric constant)  and $I_k(M)$, $k\ge3$ in \cite{HM17}.
The $k$-th Cheeger constant is given by
\bq h_k(M)\ :=\ \inf_{(\Om_1,\cdots,\Om_k)\in\A_k}\max_j \eta(\Om_j),\eq
where 
\[
\eta(\Om):=    \frac{|\pa\Omega\cap M|}{| \Om|}=\frac{|\intS|}{| \Om|}.\]
 The main motivation for the study of the higher Cheeger constants stems from the fact that they are used for bounding eigenvalues of the Laplace operator\footnote{{Note that there are two slightly different definitions for the well-known Cheeger constant $h_2(M)$ on manifolds with boundary. The choice of the definition depends on the boundary condition. The above choice for $h_2(M)$ is the one related to the second Neumann Laplace eigenvalue, see e.g. \cite{buser}.}}. 
This relationship has been intensively studied  in the literature, see \cite{Ch69,LGT12,Mic15} and the references therein. 

\smallskip

We now restrict to planar domains. 
The study of {the higher-orders Escobar constants is a largely unexplored field.
In this paper, we 
establish fundamental properties of the Escobar constants of planar domains.
More precisely, the aim of this manuscript is  to investigate the relation between $I_k(M)$, $k\ge3$,  and the geometry of a planar domain $M$. We are in particular interested in the behaviour of $I_k(M)$ for $k$ large and the configuration of optimal $k$-tuples.
This may share some similarities with the study of optimal Cheeger clusters, see \cite{BDR12,KL06,BP18} and reference therein for more details. 
A quantity similar to the second Escobar constant $I_2(M)$ also appears in the study of longtime existence result for the curve shortening flow~\cite{Hui98,Gr87}. 
\vspace{0.5cm}

\noindent{\textbf{Main results.}} 
{Escobar \cite{Esc99} and Kusner-- Sullivan \cite{KS98} independently} proved that the unit disk maximizes $I_2$ among all
bounded domains $M$ in $\R^2$ with rectifiable boundary, i.e. \begin{equation}\label{eks}I_2(M)\leq I_2(\DD).\end{equation}
{It also holds in higher dimensions. For the discussion on the higher dimensional version of this inequality we refer to \cite{CFNT17}.}\smallskip

We conjecture that this inequality also holds for higher Escobar constants.
\medskip

\begin{conjecture}\label{conj1}
Let $M\subset\R^2$ be bounded domain with {rectifiable} boundary then for every $k\ge3$
\begin{equation}\label{goal}
I_k(M)\leq I_k(\DD).
\end{equation}
\end{conjecture}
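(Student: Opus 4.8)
The plan is to exploit that $I_k$ is defined as an infimum: to establish \eqref{goal} it suffices to exhibit, for each admissible $M$, a single $k$-tuple $(\Om_1,\dots,\Om_k)\in\A_k$ with $\max_j\etap(\Om_j)\le I_k(\DD)$. The first task is therefore to compute the target $I_k(\DD)$. I would show $I_k(\DD)=\tfrac{k}{\pi}\sin\tfrac{\pi}{k}$. The upper bound comes from the symmetric configuration of $k$ congruent circular segments cut off by chords subtending equal arcs of angle $2\pi/k$, each of which has ratio exactly $\tfrac{\sin(\pi/k)}{\pi/k}$. For the matching lower bound, note that on the disk any $\Om_j$ with exterior boundary of total angular measure $\psi_j$ satisfies $|\intS_j|\ge 2\sin(\psi_j/2)$ (the chord is the shortest interior connector), so $\etap(\Om_j)\ge\tfrac{\sin(\psi_j/2)}{\psi_j/2}$; since $\sum_j\psi_j\le 2\pi$ forces $\min_j\psi_j\le 2\pi/k$ and $x\mapsto\tfrac{\sin x}{x}$ is decreasing, the largest ratio is at least $\tfrac{\sin(\pi/k)}{\pi/k}$.

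With the target value in hand, the strategy for a general $M$ is to place $k$ pairwise disjoint regions along $\partial M$, each $\Om_j$ having a single exterior arc $A_j\subset\partial M$ and closed off inside $M$ by the chord (or, in the non-convex case, by the shortest interior connector) joining the endpoints of $A_j$; then $\etap(\Om_j)=|\intS_j|/|A_j|$. The key geometric input is that this ratio is governed by how much $\partial M$ turns along $A_j$: the more the boundary curves, the shorter the connector relative to the arc. Because the total turning of $\partial M$ is $2\pi$, and because the disk distributes this turning uniformly, one expects any other domain to contain arcs that curve more than the uniform ``budget'' $2\pi/k$ per region allows, on which chord-cut regions beat $\tfrac{\sin(\pi/k)}{\pi/k}$. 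I would try to make this precise by a balancing argument: move the $k$ cut points until the ratios equalize, then compare the common value to the uniform-curvature (disk) case.

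This program can be carried out cleanly when the turning of $\partial M$ is concentrated, which is exactly the polygonal case and explains the restriction $k\ge n$ in the abstract. At a corner of interior angle $\alpha$, cutting the tip at equal distance along both edges gives a region of ratio exactly $\sin(\alpha/2)$, independent of scale, and nesting such regions geometrically toward the corner produces arbitrarily many disjoint regions of ratio arbitrarily close to $\sin(\alpha/2)$. Hence $I_k(M)\le\sin(\alpha_{\min}/2)$ for every $k$, where $\alpha_{\min}$ is the sharpest interior angle. For a convex $n$-gon one has $\alpha_{\min}\le\pi-2\pi/n$, so $\sin(\alpha_{\min}/2)\le\cos(\pi/n)$, and the elementary inequality $n\tan(\pi/n)\ge\pi$ gives $\cos(\pi/n)\le\tfrac{n}{\pi}\sin(\pi/n)\le\tfrac{k}{\pi}\sin(\pi/k)=I_k(\DD)$ for all $k\ge n$, as $\tfrac{k}{\pi}\sin\tfrac{\pi}{k}$ is increasing in $k$.

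The hard part is the general smooth (and non-convex) case, which is why \eqref{goal} is only a conjecture. The difficulty is that equidistributing the turning does not suffice: a nearly straight arc of turning $2\pi/k$ can have chord-to-arc ratio close to $1>\tfrac{\sin(\pi/k)}{\pi/k}$, so one cannot simply partition $\partial M$ into $k$ equal-turning arcs. One must instead show that the uniform curvature of the circle is genuinely the worst case for the min--max, i.e. that any non-circular boundary always offers a cluster of above-average-curvature arcs on which to hide the $k$ regions; I expect this extremal comparison, rather than any single estimate, to be the main obstacle. Non-convexity compounds it, since chords may exit $M$ and the turning acquires negative contributions along reflex arcs, so the clean ``total turning $=2\pi$'' budget must be replaced by a more careful accounting of shortest interior connectors.
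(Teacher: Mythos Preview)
Your assessment is correct: the statement is a conjecture, and the paper does not prove it in general. Your outline of the partial results---the value $I_k(\DD)=\tfrac{k}{\pi}\sin\tfrac{\pi}{k}$ and the corner-concentration argument yielding $I_k(M)\le\sin(\theta_{\min}/2)\le\cos(\pi/n)\le I_k(\DD)$ for Euclidean $n$-gons when $k\ge n$---matches the paper's Theorems~\ref{balls} and~\ref{intro-polygon} essentially exactly, including the nested isosceles triangles near the sharpest vertex and the elementary comparison $\cos(\pi/n)\le\tfrac{n}{\pi}\sin(\pi/n)$ via $\tan(\pi/n)\ge\pi/n$.

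One genuine gap in your sketch of the disk lower bound: the inequality $|\intS_j|\ge 2\sin(\psi_j/2)$ presumes $\extS_j$ is a single arc. The paper has to work harder here, classifying connected $\Omega_j$ by the number of components of $\extS_j$ (``type $n$'' domains) and arguing inductively that any type $\ge 2$ domain forces enough type~$1$ domains in the complementary arcs that the total exterior-boundary budget $\sum_j|\extS_j|\le 2\pi$ is violated. Your pigeonhole on $\sum\psi_j\le 2\pi$ is the right endgame, but only after this reduction to type~$1$.

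On the open general case, the paper's suggested route differs from your curvature-budget heuristic. Rather than balancing turning along $\partial M$, the authors propose approximating $\partial M$ by a polygon $D\in\cP_n$ with boundary $\epsilon$-close in length and Hausdorff distance, extending an optimal $k$-tuple of $D$ to one of $M$ with controlled error in $\etap$, and thereby reducing the full conjecture to the purely polygonal inequality $I_k(D)\le I_k(\DD)$ for \emph{all} $k<n$---which they leave as the remaining obstacle. Your diagnosis of why equidistributing turning fails (a nearly straight arc of turning $2\pi/k$ has chord-to-arc ratio close to $1$) is sound and is morally the same obstruction the paper identifies, but the paper does not pursue that min--max comparison directly.
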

\medskip

We prove this conjecture for $M$ being a polygon in $\R^2$ and $k$ being greater or equal than the number of vertices of $M$. Further, we provide a similar result for a family of curvilinear polygons.
The general case, however, remains open. 
\smallskip

In order to  prove inequality (\ref{goal}) for polygons in $\R^2$
and curvilinear polygons respectively, we first
provide {the value of} $ I_k(\DD)$.

\begin{theorem}[\textbf{Disks}]\label{balls}
The $k$-th Escobar constant of the unit disk $\DD\subset \R^2$ centered at the origin is given by
$$I_k(\DD)=\frac{\sin(\pi/k)}{\pi/k},$$
where $k\in\N$.
\end{theorem}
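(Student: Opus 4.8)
The plan is to prove the two matching inequalities $I_k(\DD)\le \frac{\sin(\pi/k)}{\pi/k}$ and $I_k(\DD)\ge \frac{\sin(\pi/k)}{\pi/k}$ separately. For the upper bound it suffices to exhibit one admissible $k$-tuple realizing the claimed value, and the natural candidate is the collection of the $k$ congruent circular segments cut off by the sides of a regular $k$-gon inscribed in $\DD$. Each such segment $\Om_j$ has interior boundary $\intS$ equal to a side of the polygon, i.e. a chord subtending a central angle $2\pi/k$, so $|\intS|=2\sin(\pi/k)$, while its exterior boundary $\extS$ is the corresponding circular arc of length $2\pi/k$. These $k$ segments are mutually disjoint (they meet only at polygon vertices, which lie on $\pa\DD$), hence form an element of $\A_k$; since they are congruent, $\max_j \etap(\Om_j)=\frac{2\sin(\pi/k)}{2\pi/k}=\frac{\sin(\pi/k)}{\pi/k}$, giving the upper bound.

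The core of the argument is the lower bound, for which I would prove a single relative isoperimetric estimate valid for each domain separately. Set $f(\alpha):=\frac{2\sin(\alpha/2)}{\alpha}$ and note that $f$ is strictly decreasing on $(0,2\pi]$ because $\sin(x)/x$ is decreasing on $(0,\pi]$. The key claim is that for any $\Om\in\A$ with total exterior length $\alpha:=|\extS|\le 2\pi$ one has $|\intS|\ge 2\sin(\alpha/2)$, and therefore $\etap(\Om)\ge f(\alpha)$. To see this I would traverse $\pa\Om$ and observe that its interior boundary consists of curves, each of which joins the two endpoints of one of the circular gaps separating consecutive exterior arcs; since such a curve lies in $\DD$, its length is at least the corresponding chord $2\sin(g_i/2)$, where $g_i$ is the angular measure of the $i$-th gap. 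Summing and using concavity of $g\mapsto\sin(g/2)$ on $[0,2\pi]$ to minimize $\sum_i 2\sin(g_i/2)$ under the constraint $\sum_i g_i = 2\pi-\alpha$ (the minimum being attained in the limit where a single gap carries all the mass) yields $|\intS|\ge 2\sin((2\pi-\alpha)/2)=2\sin(\alpha/2)$, as claimed.

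Granting this estimate, the lower bound follows by a pigeonhole argument. Given any $(\Om_1,\dots,\Om_k)\in\A_k$, the exterior boundaries are subarcs of $\pa\DD$ with pairwise disjoint interiors, because the $\Om_j$ are disjoint open sets and at an interior point of $\extS_j$ the domain $\Om_j$ already fills the single inner side of $\pa\DD$; hence $\sum_j |\extS_j|\le 2\pi$ and some index $j^*$ satisfies $|\extS_{j^*}|\le 2\pi/k$. (If some $\Om_j$ has empty exterior boundary, its ratio is $\infty$ and there is nothing to prove.) Applying the estimate and the monotonicity of $f$,
\be
\max_j \etap(\Om_j)\ \ge\ \etap(\Om_{j^*})\ \ge\ f\big(|\extS_{j^*}|\big)\ \ge\ f(2\pi/k)\ =\ \frac{\sin(\pi/k)}{\pi/k},
\ee
and taking the infimum over $\A_k$ gives $I_k(\DD)\ge \frac{\sin(\pi/k)}{\pi/k}$, completing the proof.

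I expect the main obstacle to be making the relative isoperimetric estimate fully rigorous: identifying precisely the endpoint/gap combinatorics of $\pa\Om$ when $\intS$ has several components, and handling domains that are disconnected, not simply connected, or whose interior boundary contains closed components not meeting $\pa\DD$. In all of these cases the extra pieces only increase $|\intS|$, so the inequality should be robust, but the bookkeeping that reduces a general configuration to the single effective chord bound $2\sin(\alpha/2)$ is the step requiring the most care.
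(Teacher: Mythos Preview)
Your upper bound is exactly the paper's. For the lower bound, however, you take a genuinely different route. The paper reduces to connected $\Omega_j$ with straight-segment interior boundaries, classifies each by its ``type'' (the number of connected components of $\Sigma^\partial_j$), and then runs a global packing argument on $\partial\DD$: assuming $\max_j\eta^\partial(\Omega_j)\le f(2\pi/k)$, it shows that a type-$n$ domain with $n\ge2$ would force $\sum_j|\Sigma^\partial_j|>2\pi$, so every $\Omega_j$ must be a type-$1$ circular segment with arc at least $2\pi/k$, pinning down the optimal $k$-tuple as the inscribed regular $k$-gon. You instead prove the single pointwise estimate $\eta^\partial(\Omega)\ge f(|\Sigma^\partial|)$ via the relative isoperimetric inequality $|\Sigma^\circ|\ge 2\sin(\alpha/2)$ and then finish by pigeonhole on the $\alpha_j$'s. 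Your argument is shorter and never needs to track how many arcs each $\Sigma^\partial_j$ has or how the $\Omega_j$ interact; the paper's case analysis, in exchange for more work, also yields uniqueness of the optimal configuration. The chord/gap combinatorics you outline for simply connected $\Omega$ is correct (the non-crossing of interior boundary curves forces the exterior arcs to be visited in their cyclic order on $\partial\DD$, so each interior curve spans one gap, and subadditivity of $g\mapsto 2\sin(g/2)$ gives $\sum_i 2\sin(g_i/2)\ge 2\sin((2\pi-\alpha)/2)=2\sin(\alpha/2)$); the reduction of a general $\Omega$ to simply connected pieces that you flag as the delicate step is indeed routine, since filling holes and passing to connected components can only decrease $|\Sigma^\circ|/|\Sigma^\partial|$ and subadditivity handles the summation over components.
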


 Then we can prove inequality (\ref{goal}) for {a regular $n$-polygon $M$  in $\R^2$}  when $k$ is either greater than or equal to the number of vertices of $M$, or $k$ is a divisor of $n$.
\begin{theorem}[\textbf{Regular polygons}]\label{regular-polygons}
Let $n\geq 3$. The regular $n$-gon $D_n$ satisfies
\begin{itemize}
    \item[(i)] 
    the identity
$$I_k(D_n)= \cos\left(\pi/n\right)\le I_k(\DD),$$
for all $k\ge n$; 
\item[(ii)] and 
$$I_k(D_n)=\sin(\pi/k)\cot(\pi/n)k/n\le I_k(\DD),$$
if $n=m k$, where $m\in\N$. 
\end{itemize}
\end{theorem}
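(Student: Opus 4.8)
The plan is to pin down the exact value of $I_k(D_n)$ by matching an explicit family of test configurations (upper bound) with a geometric lower bound, and then to notice that the comparison with the disk is an elementary one-variable estimate. Indeed, using Theorem~\ref{balls} and the fact that $x\mapsto \sin x/x$ is decreasing on $(0,\pi)$ (so $I_k(\DD)=\sin(\pi/k)/(\pi/k)\ge \sin(\pi/n)/(\pi/n)$ for $k\ge n$), the inequality $\cos(\pi/n)\le I_k(\DD)$ in (i) reduces to $(\pi/n)\cos(\pi/n)\le\sin(\pi/n)$, i.e.\ $\tan(\pi/n)\ge \pi/n$; and in (ii), after cancelling $\sin(\pi/k)$, the inequality $\tfrac kn\sin(\pi/k)\cot(\pi/n)\le \sin(\pi/k)/(\pi/k)$ becomes $\pi\cos(\pi/n)\le n\sin(\pi/n)$, the same estimate. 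Both hold since $\pi/n\in(0,\pi/2)$ for $n\ge3$. So the heart of the matter is computing $I_k(D_n)$ itself.

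For the upper bounds I would use symmetric test configurations. For (ii), writing $\ell=n/k$, partition $\pa D_n$ into $k$ congruent arcs, each consisting of $\ell$ full sides and cut at the two enclosing side-midpoints, and let $\Om_j$ be the circular-segment-type region bounded by such an arc and the chord joining its endpoints; with circumradius $R$ and apothem $r=R\cos(\pi/n)$ the interior boundary has length $2r\sin(\pi/k)$ and the exterior boundary has length $\ell\cdot 2R\sin(\pi/n)$, giving $\etap(\Om_j)=\tfrac kn\sin(\pi/k)\cot(\pi/n)$, and the $k$ regions are disjoint since their chords bound a concentric regular $k$-gon. The case $\ell=1$ (i.e.\ $k=n$) specializes to the $n$ congruent corner caps and yields $\cos(\pi/n)$. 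For $k>n$ in (i) the clean point is that the value is only an infimum: at a corner of interior angle $\pi-2\pi/n$, a nested isoceles ``band'' with legs between $a$ and $b$ has $|\intS|=2(a+b)\cos(\pi/n)$ and $|\extS|=2(b-a)$, so $\etap=\tfrac{a+b}{b-a}\cos(\pi/n)\to\cos(\pi/n)$ as $a/b\to0$. Stacking such bands geometrically ($a_j=A^j a_0$) along the half-side at each of the $n$ corners produces, for any $A$ and any prescribed $k$, a disjoint configuration whose maximal ratio is $\tfrac{A+1}{A-1}\cos(\pi/n)$; letting $A\to\infty$ shows $I_k(D_n)\le\cos(\pi/n)$ for all $k$.

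The lower bounds are the substantive direction. The basic estimate I would use comes from the divergence theorem: for a constant vector field the flux through $\pa\Om$ vanishes, and splitting $\pa\Om=\intS\cup\extS$ and writing $x_i=|\extS\cap s_i|$ for the length of $\extS$ on the $i$-th side (outward normal $\nu_i$) gives $|\intS|\ge\bigl|\sum_i x_i\nu_i\bigr|$, hence
\[
\etap(\Om)\ \ge\ \frac{\bigl|\sum_i x_i\nu_i\bigr|}{\sum_i x_i},
\]
which is sharp on a single symmetric corner cap, where $|\nu_i+\nu_{i+1}|=2\cos(\pi/n)$ returns exactly $\cos(\pi/n)$. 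Since $I_k(D_n)$ is increasing in $k$, for (i) it suffices to prove $I_n(D_n)\ge\cos(\pi/n)$, and for (ii) one expects the congruent configuration above to be optimal. To upgrade the flux estimate to these bounds I would add a Gauss--Bonnet/turning count: the boundary of a simply connected $\Om_j$ turns by $2\pi$, a vertex of $D_n$ lying in the interior of $\extS_j$ contributes turning $2\pi/n$, and the vertices are distributed among the disjoint $\extS_j$ with multiplicity at most one, so the total vertex-turning available to the family is at most $2\pi$; a connected exterior boundary enclosing $m$ vertices forces $\etap\ge\cos(m\pi/n)$, so $\etap<\cos(\pi/n)$ costs at least two vertices, and feeding this into the global budget over $n$ regions forces at least one region to have ratio $\ge\cos(\pi/n)$.

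The main obstacle is that this counting is defeated by two kinds of regions invisible to the flux/turning heuristics: regions whose contact set $\extS$ is disconnected (thin band-type regions that skim several sides and close up across the vertices by short chords, reaching ratios below $\cos(\pi/n)$ while enclosing no vertex), and a region winding around the center (an annulus, whose flux can vanish yet whose ratio tends to $0$). I would treat the winding case topologically---among disjoint boundary-touching regions at most one can separate the center from $\pa D_n$, so it can be isolated and removed from the count---and handle the band regions by replacing the per-region turning count with a global accounting carried out on the arc--chord polygon formed by the arcs of $\extS_j$ and the chords of $\intS_j$, charging each short closing chord against the boundary length it saves. Making this global charging precise, and verifying that it reproduces exactly the constant $\cos(\pi/n)$ (respectively $\tfrac kn\sin(\pi/k)\cot(\pi/n)$ for the symmetric optimum in (ii)), is where I expect the real work to lie.
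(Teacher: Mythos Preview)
Your upper bounds and the disk comparison agree with the paper: for (i) the paper also nests isoceles triangles at a single corner (with scales $\delta_j=\epsilon^{-1/(k-j+1)}$, equivalent to your geometric stacking in $A$), for (ii) it uses exactly your inscribed midpoint $k$-gon, and in both parts the comparison with $I_k(\DD)$ reduces to $\tan(\pi/n)\ge\pi/n$.

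The lower bound for (i) is where your proposal diverges and is left incomplete. Your flux estimate $|\intS|\ge\bigl|\sum_i x_i\nu_i\bigr|$ is valid and, together with the vertex count, does give $\etap(\Om)\ge\cos(\pi/n)$ whenever $\extS$ is connected and contains at most one vertex; but you correctly flag, and do not close, the gap for disconnected exterior boundaries and for regions winding around the centre. The paper bypasses all of this with a purely combinatorial device: call $\Om$ of \emph{type $\ell$} if $\extS$ meets exactly $\ell$ distinct edges of $D_n$. An easy induction shows that among $k$ disjoint connected domains at most $n-2$ can be of type $\ge3$; hence for $k\ge n$ some $\Om_j$ is of type $\le2$. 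Type~$1$ forces $\etap\ge1$; type~$2$ on non-adjacent edges forces $\etap\ge1$; and a type~$2$ domain on adjacent edges can be absorbed into an isoceles corner triangle, giving $\etap\ge\cos(\pi/n)$. No flux, no turning number, no topological case split---the ``band'' and ``annulus'' obstacles you worry about simply never arise in this framework.

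For (ii) you offer no lower bound beyond expectation, and this is a genuine gap. The paper proves it via a separate symmetrization lemma (relegated to an appendix): for any $\Om\subset D_n$ with connected $\extS$ of length $L\le|\pa D_n|/2$, the ratio $\etap(\Om)$ is bounded below by that of the symmetric domain with the same exterior length centred either at an edge-midpoint or at a vertex; both symmetric ratios are monotone decreasing in $L$, and at $L=|\pa D_n|/k$ the midpoint version equals exactly $\tfrac kn\sin(\pi/k)\cot(\pi/n)$. A length-budget argument ($\sum_j|\extS_j|\le|\pa D_n|$) then shows the symmetric $k$-tuple is optimal among connected exterior boundaries, and a further count rules out disconnected $\extS$. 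This symmetrization step is the substantive content of the equality in (ii) and is orthogonal to your flux/charging approach.
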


The main idea for the proof of $(i)$ is that the domains $\Omega_j$ of the $k$-tuples concentrate at the vertices of $D_n$ -- also see Figure\,\ref{fig}.
Subsequently we provide the inequality (\ref{goal}) for some cases with $k<n$ and $M$ being a regular $n$-gon.

\smallskip

Afterwards, we address the above conjecture for Euclidean $n$-gon. 
. 

\begin{theorem}[\textbf{Euclidean $n$-gon}]\label{intro-polygon} Let $M$ be a Euclidean $n$-gon
and assume that the interior angles of $M$ are ordered as follows

$$0<\theta_1\le\theta_2\le\cdots\le\theta_n<2\pi.$$

Then we have 
$$I_k(M)\le \sin(\theta_1/2)\le \cos\left(\pi/n\right),\qquad\mbox{for all}\,\,\,\, k\ge3.$$
In particular, we have $$I_k(M)\leq I_k(\DD)$$ when either $k\geq n$  or $n\le 4$.
\end{theorem}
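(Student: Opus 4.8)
The plan is to prove the chain $I_k(M)\le\sin(\theta_1/2)\le\cos(\pi/n)$ and then deduce the comparison with the disk. For the first inequality I would exploit the scale invariance of the corner carrying the smallest interior angle. Since the interior angles sum to $(n-2)\pi$, the smallest one obeys $\theta_1\le\pi-2\pi/n<\pi$, so this corner is convex. Placing its vertex at the origin, $M$ agrees near the vertex with a wedge of opening angle $\theta_1$ whose two sides lie on $\partial M$. For radii $0<a<b$ (small enough that the wedge is contained in $M$) let $\Omega_{a,b}$ be the trapezoid cut out between the two chords joining the points at distance $a$, respectively $b$, from the vertex on the two sides. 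Its exterior boundary $\extS$ consists of the two edge-segments of total length $2(b-a)$, while its interior boundary $\intS$ consists of the two chords of total length $2(a+b)\sin(\theta_1/2)$; hence
\[
\etap(\Omega_{a,b})=\frac{a+b}{b-a}\,\sin(\theta_1/2).
\]
Writing $a=\delta b$ this becomes $\frac{1+\delta}{1-\delta}\sin(\theta_1/2)$, depending only on the ratio $\delta$ and not on the scale $b$.

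The step I expect to carry the real content is turning this one domain into arbitrarily many: scale invariance lets me stack the trapezoids disjointly. Fixing $\delta\in(0,1)$ and a small $b_1$, I set $b_i=\delta^{i-1}b_1$ and $a_i=\delta^{i}b_1$, so the trapezoids $\Omega_{a_i,b_i}$ occupy the disjoint radial bands $[\delta^{i}b_1,\delta^{i-1}b_1]$ and nest toward the vertex. This yields infinitely many mutually disjoint domains in $\A(M)$, each with the same ratio $\frac{1+\delta}{1-\delta}\sin(\theta_1/2)$. Selecting any $k$ of them produces a $k$-tuple in $\A_k$, whence $I_k(M)\le\frac{1+\delta}{1-\delta}\sin(\theta_1/2)$ for every $k$; letting $\delta\to0$ gives $I_k(M)\le\sin(\theta_1/2)$ uniformly in $k$. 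This $k$-independence is precisely what the regular case of Theorem \ref{regular-polygons} predicts, where the bound is saturated for all $k\ge n$.

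The second inequality is elementary: from $\theta_1\le\pi-2\pi/n$ one gets $\theta_1/2\le\pi/2-\pi/n$, and since both sides lie in $[0,\pi/2]$ where $\sin$ is increasing, $\sin(\theta_1/2)\le\sin(\pi/2-\pi/n)=\cos(\pi/n)$. (The same bound $\theta_1<\pi$ is what guarantees the corner above is convex, so the construction is legitimate.)

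Finally, for the comparison with the disk I would combine $I_k(M)\le\cos(\pi/n)$ with Theorem \ref{balls}, which gives $I_k(\DD)=\frac{k}{\pi}\sin(\pi/k)$. When $k\ge n$, Theorem \ref{regular-polygons}(i) already records $\cos(\pi/n)\le I_k(\DD)$, so $I_k(M)\le I_k(\DD)$ is immediate. When $n\le4$ one has $\cos(\pi/n)\le\cos(\pi/4)=1/\sqrt2$; since $I_k(\DD)$ is increasing in $k$ with $I_3(\DD)=\frac{3\sqrt3}{2\pi}>1/\sqrt2$, it follows that $I_k(\DD)\ge I_3(\DD)>1/\sqrt2\ge\cos(\pi/n)$ for all $k\ge3$, giving $I_k(M)\le I_k(\DD)$ in this range as well.
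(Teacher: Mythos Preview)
Your proof is correct and rests on the same idea as the paper: concentrate the $k$ test domains at the vertex of smallest interior angle $\theta_1$ by cutting off successive bands with chords parallel to the chord subtending that corner. The paper's implementation is slightly different in the bookkeeping: it takes $\Omega_1$ to be an isosceles triangle at the vertex (so $\etap(\Omega_1)=\sin(\theta_1/2)$ exactly) and then stacks trapezoidal bands of widths $\epsilon,\delta_1\epsilon,\dots,\delta_{k-1}\epsilon$ with $\delta_j=\epsilon^{-1/(k-j+2)}$, obtaining $\etap(\Omega_j)=\sin(\theta_1/2)+O(\epsilon^{1/((k-j+3)(k-j+2))})$ and sending $\epsilon\to0$. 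Your geometric-progression choice $b_i=\delta^{i-1}b_1$, $a_i=\delta^i b_1$ is tidier: the trapezoids are all similar, so they share the \emph{same} value $\etap=\frac{1+\delta}{1-\delta}\sin(\theta_1/2)$, and you obtain infinitely many disjoint domains at once, which makes the uniformity in $k$ immediate rather than asymptotic. The treatment of $\sin(\theta_1/2)\le\cos(\pi/n)$ and the deduction of $I_k(M)\le I_k(\DD)$ for $k\ge n$ match the paper; your explicit numerical check for the case $n\le4$ (via $\cos(\pi/4)=1/\sqrt2<I_3(\DD)=3\sqrt3/(2\pi)$) fills in a step the paper leaves to the reader.
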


The proof is a modification of the proof for Theorem\,\ref{regular-polygons}.
However, as an additional obstacle we no longer have control over the lengths of the edges of the polygons.
We also prove that we have equality $I_k(M)=\sin(\theta_1/2)$ if $k$ is \lq sufficiently large\rq.

\smallskip

Finally, we generalize these considerations to curvilinear polygons. This is done by adapting the proof of Theorem \ref{intro-polygon} in order to find a suitable family of $k$-tuples near the vertix with smallest angle.

\begin{theorem}[\textbf{Curvilinear Polygons}]\label{curvilinear}
Let $M\subset\R^2$ be a curvilinear $n$-gon with at least one interior  angle $<\pi$. Order the interior angles of $M$ in an increasing order 
$$0<\theta_1\le\theta_2\le\cdots\le\theta_n<2\pi.$$ Then we have the inequality $$I_k(M)\le\sin(\theta_1/2),$$ 
{for all $k\ge3$.}
\end{theorem}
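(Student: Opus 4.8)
The plan is to prove the bound by exhibiting, for every $\epsilon>0$ and every $k$, a $k$-tuple $(\Om_1,\dots,\Om_k)\in\A_k$ whose members cluster near the corner of $M$ of smallest interior angle and satisfy $\etap(\Om_j)\le\sin(\theta_1/2)+\epsilon$ for all $j$; letting $\epsilon\to0$ then gives $I_k(M)\le\sin(\theta_1/2)$. Let $v$ be the vertex where the angle $\theta_1$ is attained. Since $M$ has an interior angle $<\pi$ and $\theta_1$ is the smallest angle, we have $\theta_1<\pi$, so $v$ is a convex corner; let $e_1,e_2$ be the two boundary arcs emanating from $v$, parametrised by arclength, whose tangent rays at $v$ enclose the angle $\theta_1$. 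The starting point is the elementary computation underlying Theorem~\ref{intro-polygon}: for the flat wedge bounded by two rays meeting at angle $\theta_1$, the triangular cap with vertices $v$, a point at distance $a$ on one ray, and a point at distance $b$ on the other has $\etap=\sqrt{a^2+b^2-2ab\cos\theta_1}/(a+b)$, which (being scale invariant) is minimised at $a=b$, with value $\sin(\theta_1/2)$.

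The construction consists of a central triangular cap surrounded by nested trapezoidal caps of geometrically increasing size. Fix a large ratio $\mu>1$ and a small scale $\delta>0$, and set $r_0=0$ and $r_j=\delta\,\mu^{\,j-k}$ for $1\le j\le k$, so that $r_k=\delta$ and $r_j/r_{j-1}=\mu$ for $j\ge2$. Denote by $A_j\in e_1$ and $B_j\in e_2$ the points at arclength $r_j$ from $v$, and let $\Om_j$ be the region cut off between the chords $A_{j-1}B_{j-1}$ and $A_jB_j$ (so $\Om_1$ is the triangle $vA_1B_1$). For straight edges one computes directly
\begin{equation*}
\etap(\Om_j)=\frac{(r_{j-1}+r_j)\sin(\theta_1/2)}{r_j-r_{j-1}}=\frac{\mu+1}{\mu-1}\,\sin(\theta_1/2)\quad(j\ge2),\qquad \etap(\Om_1)=\sin(\theta_1/2),
\end{equation*}
since the exterior boundary of $\Om_j$ has length $2(r_j-r_{j-1})$ while its interior boundary is made of the two chords of lengths $2r_{j-1}\sin(\theta_1/2)$ and $2r_j\sin(\theta_1/2)$. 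Hence $\max_j\etap(\Om_j)=\tfrac{\mu+1}{\mu-1}\sin(\theta_1/2)\to\sin(\theta_1/2)$ as $\mu\to\infty$, which is exactly the mechanism behind Theorem~\ref{intro-polygon}.

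The main obstacle, and the only genuinely new point compared with the Euclidean case, is to control the effect of the curvature of $e_1,e_2$ on these lengths. Here I would argue by rescaling: for fixed $\mu$ and $k$, blow up a neighbourhood of $v$ by the factor $1/\delta$ and let $\delta\to0$. Because $\etap$ is scale invariant this does not change the ratios, whereas the rescaled boundary arcs converge to the tangent rays at $v$; concretely, parametrising by arclength makes each exterior piece have length exactly $r_j-r_{j-1}$, and smoothness of the edges (bounded curvature) gives $|A_jB_j|=2r_j\sin(\theta_1/2)\,(1+O(r_j))$, so that $\etap(\Om_j)\to\tfrac{\mu+1}{\mu-1}\sin(\theta_1/2)$ uniformly over the finitely many indices $j$. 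It remains to check that for $\delta$ small the chords lie in the interior of $M$ and that the $\Om_j$ are genuinely disjoint members of $\A$: both follow from convexity of the corner ($\theta_1<\pi$) together with the fact that, after rescaling, the whole configuration sits inside an arbitrarily thin wedge about $v$ on which $\partial M$ is a graph over each tangent ray and contains no other vertex. Given $\epsilon>0$, one first chooses $\mu$ with $\tfrac{\mu+1}{\mu-1}\sin(\theta_1/2)<\sin(\theta_1/2)+\tfrac{\epsilon}{2}$ and then $\delta$ small enough that the curvature corrections contribute at most $\tfrac{\epsilon}{2}$; this produces the desired $k$-tuple and completes the proof.
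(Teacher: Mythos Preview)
Your proof is correct and follows the same overall strategy as the paper: build a nested family of $k$ cap-shaped domains accumulating at the smallest-angle vertex and show that their $\etap$-values can be pushed down to $\sin(\theta_1/2)$.

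The differences are purely technical. First, you use a geometric sequence of arclength radii $r_j=\delta\mu^{j-k}$, which makes the trapezoid ratio identically $\tfrac{\mu+1}{\mu-1}\sin(\theta_1/2)$ for all $j\ge2$; the paper instead recycles the sequence $\delta_j=\epsilon^{-1/(k-j+1)}$ from Theorem~\ref{regular-polygons}, where the ratio depends on $j$ and tends to $\sin(\theta_1/2)$ only as $\epsilon\to0$. Your choice is tidier. Second, and more substantively, to pass from straight to curved edges you use a rescaling/Taylor argument ($|A_jB_j|=2r_j\sin(\theta_1/2)(1+O(r_j))$), whereas the paper encloses a neighbourhood of the vertex in a straight cone of slightly larger angle $\theta_1+\varepsilon$, defines $\Omega_j$ as the intersection of a strip with $M$, and compares $|\intS_j|$ and $|\extS_j|$ monotonically with the corresponding quantities for the cone. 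The cone comparison is a bit more robust: it never needs the chord $A_jB_j$ to lie entirely inside $M$, so it avoids the verification you sketch in your last paragraph. Conversely, your approach gives the sharper error control and keeps the exterior lengths exact via the arclength parametrisation.
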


\noindent\textbf{Acknowledgments.}
The authors are grateful to anonymous referees for their helpful comments which improved the presentation of the paper. The authors would like to thank the Max Planck Institute for Mathematics in Bonn (MPIM) and the University of Bristol for the hospitality and supporting a research visit of A.~H. and of A.~S. respectively. A.~H. is partially supported by the EPSRC grant EP/T030577/1.
A.~S. gratefully acknowledges the
supports of
the Deutsche Forschungsgemeinschaft (DFG, German Research Foundation) - Project-ID 427320536 - SFB 1442, as well as Germany's Excellence Strategy EXC 2044 390685587, Mathematics M\"unster: Dynamics-Geometry-Structure. 

\section{Elementary properties of the Escobar constants}
\label{sec-ik}
In this section we collect elementary properties of the Escobar constants $I_k(M)$, where $M$ is a bounded domain of $\R^2$ with piecewise smooth boundary $\partial M$.

\smallskip

First we show that the Escobar constants $I_k(M)$ are bounded from above by $1$. For $k=2$ this result has already been stated in \cite{Esc99} in a more general setting. 

\begin{lemma}
For a bounded domain $M$ of $\R^2$ with piecewise smooth boundary $\partial M$,
we have 
\begin{equation*}
  I_k(M)\leq 1  
\end{equation*}
for any $k\in\N_+$.
\end{lemma}
\begin{proof}
Since $I_1(M)=0$, below we consider $I_k(M)$ for $k\geq 2$ only.
Indeed, take a connected smooth arc $U$ of  $\partial M$ with length $|U|=\ell$.
Consider $k$ disjoint connected arcs $B_i\subset U$, $i=1,\cdots,k$, each of length smaller than $\ell/k$, i.e. $|B_i|<\ell/k$.
Define
$$F:U\times [0,\epsilon] \to M, (p,t)\mapsto p-t\nu,$$ where $\nu$ is the unit outward normal direction along $\pa M$ and $\epsilon$ is chosen small enough so that  $F(\cdot,t)$ is a diffeomorphism for any $t\in [0,\epsilon]$. Then we can consider $(\Omega_1^\epsilon,\cdots, \Omega_k^\epsilon)\in\mathcal{A}_k$, where $\Omega_i^\epsilon=F(B_i\times(0,\epsilon))$. Thus, we have $I_k(M)\le{\max_j\eta^\partial(\Omega_i^\epsilon)\to 1}$ when $\epsilon$ tends  to 0.
\end{proof}

\begin{remark}
\label{rem-1}
There is however no universal, positive lower bound on the Escobar constants. Indeed, 
let $$M_\epsilon=[-\epsilon,\epsilon]\times[-1/2,k+1/2]$$ be a thin rectangle and $$\Om_i=(-\epsilon,\epsilon)\times(i-1,i),$$ $i\in\{1,\cdots k\}$ rectangles therein. Then $\etap(\Om_i)=2\epsilon$. Therefore, $I_k(M_\epsilon)$ goes to $0$ as $\epsilon\to0$.
\end{remark}

Next we prove that the Escobar constants $I_k(M)$ are increasing in $k$.
\begin{lemma}
For a bounded domain $M$ of $\R^2$ with piecewise smooth boundary $\partial M$, the Escobar constants are increasing in $k$, i.e. we have $$I_{k+1}(M)\geq I_k(M)$$ for all $k\in\mathbb{N}_+$.
\end{lemma}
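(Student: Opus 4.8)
The plan is to prove the monotonicity inequality $I_{k+1}(M)\geq I_k(M)$ directly from the definition, by showing that any admissible $(k+1)$-tuple yields an admissible $k$-tuple whose maximum isoperimetric ratio is no larger. Concretely, given any $(\Om_1,\dots,\Om_{k+1})\in\A_{k+1}$, I would simply discard one of the domains, say $\Om_{k+1}$, and consider the truncated tuple $(\Om_1,\dots,\Om_k)$. Since the $\Om_j$ were mutually disjoint and nonempty with piecewise smooth boundary, the truncated tuple still consists of mutually disjoint nonempty members of $\A$, hence lies in $\A_k$. This is the one genuine idea in the argument, and it is essentially the observation that enlarging the number of required regions can only increase the infimum.

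The key chain of inequalities then runs as follows. For the truncated tuple we have the pointwise bound on the maximum,
\be
\max_{1\le j\le k}\etap(\Om_j)\ \le\ \max_{1\le j\le k+1}\etap(\Om_j),
\ee
since the maximum over a subset never exceeds the maximum over the full set. Taking the infimum on the left over all $(k+1)$-tuples (noting every such tuple produces a valid $k$-tuple) gives
\be
I_k(M)=\inf_{(\Om_1,\dots,\Om_k)\in\A_k}\max_{1\le j\le k}\etap(\Om_j)\ \le\ \inf_{(\Om_1,\dots,\Om_{k+1})\in\A_{k+1}}\max_{1\le j\le k+1}\etap(\Om_j)=I_{k+1}(M),
\ee
which is the claimed inequality. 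The middle inequality holds because the map sending a $(k+1)$-tuple to its truncation is a map from $\A_{k+1}$ into $\A_k$, so the infimum over the larger index set of a quantity dominated by the $(k+1)$-maximum is bounded below by $I_k(M)$.

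I do not anticipate a substantive obstacle here; this is a soft monotonicity statement. The only point requiring a small amount of care is the bookkeeping with infima: one must be careful that discarding $\Om_{k+1}$ does land inside $\A_k$ (which it does, because disjointness and membership in $\A$ are inherited by subfamilies), and that the inequality between the two infima points in the correct direction. In particular the statement would \emph{fail} if one tried to go the other way and augment a $k$-tuple to a $(k+1)$-tuple, since there is no canonical way to split off an additional admissible region without potentially raising the maximal ratio. Thus the entire content is the restriction argument, and the rest is a routine passage to the infimum.
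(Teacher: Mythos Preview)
Your argument is correct and follows essentially the same restriction idea as the paper's proof: from any admissible $(k+1)$-tuple, drop one domain to obtain an admissible $k$-tuple whose maximum ratio is no larger, and then pass to the infimum. The paper phrases this via a minimizing sequence and chooses to discard the element realizing the minimal $\etap$, but this refinement is cosmetic; your version, discarding an arbitrary element, works just as well since only the inequality $\max_{j\le k}\le\max_{j\le k+1}$ is needed.
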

\begin{proof}
By definition of $I_{k_0+1}(M)$, there exists a sequence $\{(\Om_1^i,\cdots,\Om_{k_0+1}^i)\}_{i\in\mathbb{N}}$ in $\A_{k_0+1}$ 
satisfying
$$I_{k_0+1}(M)=\lim_{i\rightarrow\infty}
\max_{1\leq j\leq k_0+1}\etap(\Om_j^i).$$
Associate to each element $(\Om_1^i,\cdots, \Om_{k_0+1}^i)\in\A_{k_0+1}$ 
an element $(\tilde{\Om}_1^i,\cdots, \tilde{\Om}_{k_0}^i)\in\A_{k_0}$ by erasing 
exactly one $\Om_l^i$ in $(\Om_1^i,\cdots, \Om_{k_0+1}^i)$ which attains the minimum $\min_{1\leq j\leq k_0+1}\etap(\Om_j^i)$. 
If the minimum is attained by several elements, erase the $\Om_l^i$ with 
the smallest index $l$.
By construction we have
\begin{align*}
I_{k_0}(M)\leq \lim_{i\rightarrow\infty}\max_{1\leq j\leq k_0}\etap(\tilde{\Om_j^i})=\lim_{i\rightarrow\infty}\max_{1\leq j\leq k_0+1}\etap(\Om_j^i)=I_{k_0+1}(M),
\end{align*}
 {which proves the statement.} 
\end{proof}

\section{Bounds on the Escobar constants of planar domains}
\label{sec-boundsr2}

In this section we prove the main results, Theorems\,\ref{balls}--\ref{curvilinear}.

\subsection{Escobar constants $I_k$ for disks}
\label{disks}
We start by providing $I_k(\DD)$, i.e. by proving Theorem\,\ref{balls}.
Note that since $I_k(M)$, $M\subset\R^2$, is invariant under scaling and translation of $M$, this theorem also gives $I_k$ for disks in $\R^2$.
For $k=2$, this result was proven by Escobar \cite{Esc99}.

\begin{proof}[Proof of Theorem\,\ref{balls}]
It is easy to show that 
\begin{equation}\label{inq1}
    I_k(\DD)\le\sin(\pi/k)/(\pi/k).
\end{equation}
Indeed, 
let
$z_j=e^{i(j-1)\frac{2\pi}{k}}$, 
$1\le j\le k$ on $\sS^1=\partial \DD$.
Then the $k$-tuple constituted by the $k$ domains enclosed by the arcs 
\arc{$z_jz_{j+1}$} and the corresponding segments $\overline{z_jz_{j+1}}$, $1\le j\le k$,
consists of $k$ isometric domains $\mathcal{D}$.
Since
$\eta^\pa(\mathcal{D})=\frac{\sin(\pi/k)}{\pi/k}$, this establishes inequality~\eqref{inq1}. 

\smallskip

We now prove that equality holds in (\ref{inq1}).\\  Let $\tilde\A_k(\DD)$ be the collection of all $(\Om_1,\cdots,\Om_k)\in\A_k(\DD)$ satisfying
\begin{enumerate}
    \item\label{1} $\eta^\pa(\Omega_j)\le \frac{\sin(\pi/k)}{\pi/k}$ for each $1\le j\le k$,
    \item each $\Omega_j$ is connected, and 
    \item the interior boundary of each $\Omega_j$ consists of a disjoint union of straight segments.
\end{enumerate}
We first show that
\begin{equation}\label{tildeA} 
I_k(\DD)\ =\ \inf_{(\Om_1,\cdots,\Om_k)\in\tilde\A_k(\DD)}\max_j\etap(\Om_j).\end{equation} 

 \label{6}Assume that $\Omega_j$, for a $j\in\{1,\dots,n\}$, consists of two connected components $C_1$ and $C_2$. If 
$$\frac{|\partial_IC_1|}{|\partial_EC_1|}=\frac{| \partial_IC_2|}{|\partial_EC_2|},$$
we substitute $\Omega_j$ by $C_1$, i.e. by a connected set.
Clearly, this substitution leaves  
$\max_j\etap(\Om_j)$ invariant.
Therefore, let us assume that w.l.o.g. we have 
$$\frac{|\partial_IC_1|}{|\partial_EC_1|}<\frac{| \partial_IC_2|}{|\partial_EC_2|}.$$
Since $a/b<c/d$ implies $a/b<(a+b)/(c+d)<c/d$ for $a,b,c,d\in\mathbb{R}_+$, we can substitute $\Omega_j$ by $C_1$.
This substitution decreases  
$\max_j\etap(\Om_j)$, what is permitted
since in the definition of the Escobar constant the infimum {is taken} over all admissible $k$-tuples.\\
If $\Omega_j$ consists of more than two components, the same argument allows us to substitute  
$\Omega_j$ by its connected component $C_i$ with smallest fraction $\frac{|\partial_IC_i|}{|\partial_EC_i|}$.

\smallskip

Now, assume that the interior boundary of a domain $\Om_j$ is not a straight line.
Then \begin{equation}\label{ah:inq}\etap(\Om_j) \ge \etap(\tilde\Om_j)\end{equation} where $\tilde\Om_j$ has the same exterior boundary as $\Omega_j$ but its interior boundary consists of disjoint straight lines connecting the corresponding endpoints on $\pa_E\Omega_j$. 
By the same argument as in the previous paragraph, we can {simultaneously substitute all $\Om_j$ whose interior boundary  does not consist of union of straight lines with $\tilde\Om_j$. The new collection belongs to $\tilde{\mathcal{A}}_k(\mathbb{D})$ and inequality \eqref{ah:inq} shows that 
$\max_j\etap(\Om_j)$ can decrease with this procedure. 
} The discussion above proves~\eqref{tildeA}.

\smallskip

For $\Omega \in \tilde{\A}(\DD)=\tilde\A_1(\DD)$, we say that $\Omega$ is of \textit{type $n$} when $\extS$ has $n$ connected components. 

\smallskip

Let us first assume that $\Omega_j$ is of type 1. 
Then $$\eta^\pa(\Omega_j)=\frac{2\sin(\alpha_j/2)}{\alpha_j},$$ where $\alpha_j= |\extS_j|$. 
Assume $\alpha_j\le \pi$ and
note that the function $f:\R_+\rightarrow\R$ given by $$f(\alpha)=\frac{2\sin(\alpha/2)}{\alpha}$$ is decreasing on $(0,\pi)$. Since assumption (\ref{1}) can be written as 
$$f(\alpha_j)=\eta^\pa(\Omega_j)\le f(2\pi/k),$$ it implies $\alpha_j\ge\frac{2\pi}{k}$ for $\alpha_j\in(0,\pi)$.\\
If  $\alpha_j=|\extS_j|\ge \pi$, then $\eta^\pa(\Omega_j)=\frac{2\pi-\alpha_j}{\alpha_j}f(2\pi-\alpha_j)\le f(2\pi/k)$.

\smallskip

If $\Omega_j$ is of type $n$, $n\ge2$, then each connected component of $\sS^1\setminus\extS_j$ should contain the exterior boundary of at least one $\Omega_t$ for some $t\ne j$. Otherwise, we can add this arc to the exterior boundary of $\Omega_j$ and remove the corresponding segment in the  interior boundary, see Fig. \ref{typen}. This decreases $\eta^\pa(\Omega_j)$.

\smallskip

\begin{figure}
    \centering
   \includegraphics{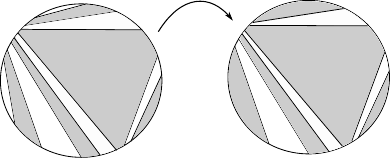}
    \caption{This example shows how one can replace type~2  domains by type~1 domains and decrease $\eta^\pa(\Omega_j),\,j=1,2$.}
    \label{typen}
\end{figure}We now consider the following cases:\medskip

\noindent\textit{Case 1.} All $\Omega_j$, $1\le j\le k$, are of type 1. Then $\alpha_j= |\extS_j|\ge\frac{2\pi}{k}$. 
Since $\sum_{j=1}^k\alpha_j\le 2\pi$, the only possible situation is that the domains $\Omega_j$ are all isometric to $\mathcal{D}$.\medskip

\noindent\textit{Case 2.} There exists at least one $j\in\{1,\dots,k\}$ such that $\Omega_j$ is of type $n\geq 2$. We claim that each connected component of $\sS^1\setminus\extS_j$ contains the boundary of  a type 1 domain $\Omega_k$. Indeed, if a connected component of $\sS^1\setminus\extS_j$ contains the boundary of only one element in the $k$-tuple, then it is clear that this element is of type 1.  By induction, it is easy to show that if it contains the exterior boundary of more than one element, then at least one of them is of type 1 or can be substituted by an element of type 1. This substitution is possible for the same reason discussed above and depicted in Fig. \ref{typen}.
Hence, we get that 
\[|\intS_j|\ge 2\sin(\alpha_1/2)+\dots 2\sin(\alpha_n/2)\ge 2n\sin(\pi/k). \]
Here $\alpha_i$, $i\in\{1,\dots, n\}$, denote the centric angles of the connected components of $\intS_j$.
Since $\eta^\pa(\Omega_j)\le f(2\pi/k)$ by assumption $(1)$, we have $|\extS_j|\ge n\frac{2\pi}{k}$. On the other hand we have $\sum_{j=1}^k|\extS_j|\le 2\pi$. Therefore, it is impossible to have any type except type 1 domains if we want to decrease the value of $\eta^\pa(\cdot)$. Hence, only case $1$ gives us an optimal $k$-tuple. This completes the proof.
\end{proof}

\subsection{Escobar constants $I_k$ for regular $n$-gon and $k\geq n$}
\label{regular-polygons-I}
{In this subsection, we will prove Theorem\,\ref{regular-polygons}.
For the considerations below, it is convenient to introduce the following notations:
Let $\mathcal{K}_n$ denote the set of all convex polygons with $n$ vertices.
For $K\in\mathcal{K}_n$, let $\tilde\A_k(K)$ denote the collection of all $k$-tuple $(\Om_1,\cdots,\Om_k)\in\A_k(K)$ such that \begin{itemize}
    \item[(1)] each $\Om_j$ is connected, and 
    \item[(2)] the closure of  each $\pa_I\Om_j$ made of disjoint union of straight segments.
\end{itemize}
We say that a domain $\Omega\in\tilde{\mathcal{A}}(K):=\tilde{\mathcal{A}}_1(K)$ is of type $\ell$, if
$\pa_E\Omega$ has components in exactly $\ell$ edges of $K$.
\begin{lemma}
\label{lemma-type}
Let $K\in\mathcal{K}_n$. Then there are at most $n-2$  domains $\Omega_j\in\tilde{\mathcal{A}}(K)$ which are of type $\ell$ with $\ell\geq 3$ and  $(\Om_1,\dots,\Om_{n-2})\in\tilde\A_{n-2}(K).$
\end{lemma}}
\begin{proof}
{We prove the claim by induction. Let $n=3$, i.e. $K\in\mathcal{K}_3$.
Assume that there exists a domain $\Omega_1\in\tilde{\mathcal{A}}(K)$ of type $3$.
Then $K\setminus\Omega_1$  consists of connected sets which contain components of at most two edges of $K$. Hence, there can be no other domain of type greater or equal than three. This settles the induction beginning.}

\smallskip 

{We now assume that the claim holds true for all $n\leq n_0$, where $n_0$ is some integer with $n_0\geq 3$.
Consider a set $K\in\mathcal{K}_{n_0+1}$ and suppose that there are at least $n_0$ domains $\Omega_j\in \tilde{\mathcal{A}}(K)$ of type at least three. Then there exists a $\Omega_{j_0}$, such that one connected component of $K\setminus\Om_{j_0}$ contains all the remaining $\Omega_j$ of type greater or equal than three.
This is indeed the case:}
{Consider a fixed $\Omega_{k_1}$ of type greater or equal than three and denote
the connected components of $K\setminus\Omega_{k_1}$ by $C^1_{1},\dots,C^1_{m_1}$. If one of these connected components contains all the remaining $\Omega_j$ of type equal to or greater than three, the claim follows. Otherwise there exists at least two $C^1_i$ which contain  sets of such types.
W.l.o.g. we assume this to be the sets $C^1_1$ and $C^1_2$.
Let $\Omega_{k_2}\in C^1_1$ and denote the connected components of $C^1_1\setminus\Omega_{k_2}$ by $C^2_{1},\dots,C^2_{m_2}$.  
If one of these connected components contains all the remaining $\Omega_j$ of type equal to or greater than three, the claim follows. Otherwise we repeat this process finitely many times to find $\Om_{j_0}$.\\
Due to the existence of $\Omega_{j_0}$, there exists an edge which is disjoint from the remaining $\Omega_j$. We now collapse this edge to one point.
Note that this  neither changes the number of the remaining $\Omega_j$ of type greater or equal than three nor does this procedure change their type. Furthermore,  $\Omega_j$ stay disjoint.
Hence, this process leaves us with a set in $\mathcal{K}_{n_0}$ which has at least $n_0-1$ domains $\Omega_j$ of type at least three. This, however, yields a contradiction to the induction assumption and hence the claim is established.
}
\end{proof}

We now prove Theorem\,\ref{regular-polygons}, i.e. we provide $I_k$ for regular $n$-gons with $n\ge 3$, which will henceforth be denoted by $D_n$.
Since the Escobar constants $I_k(D_n)$ are invariant under translation and scaling of $D_n$, we assume w.l.o.g. that
$D_n$ is centered at the origin and its vertices lie on the unit circle. 
\begin{proof}[\textbf{Proof of Theorem\,\ref{regular-polygons}}]
We start by proving ${(i)}$.
We first prove the inequality 
\begin{equation}
\label{ineq-1}
I_k(D_n)\leq\cos\left(\pi/n\right).
\end{equation}
It is sufficient to 
construct a $k$-tuple $(\Omega_1,\cdots,\Om_k)\in \A_k(D_n)$ such that
$\max\eta^{\pa}(\Omega_j)\leq\cos\left(\pi/n\right).$
Recall that for a regular convex $n$-gon, each interior angle is given by
$\theta=\pi-2\pi/n$.
Near a fixed corner of $D_n$, consider the $k$-tuple $(\Omega_1,\cdots,\Om_k)$ depicted in Figure\,\ref{fig}, where $$\delta_j=\epsilon^{-\frac{1}{k-j+1}}.$$
\begin{figure}
    \centering
   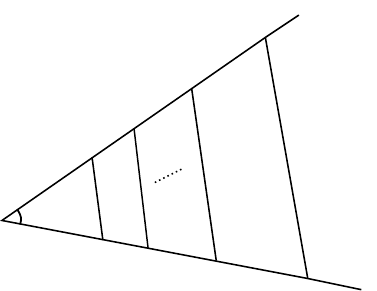
    \caption{$k$-tuple concentrating at a corner.}
    \label{fig}
\end{figure}
A straightforward calculation yields 
$$\eta^\pa(\Om_1)=\sin(\theta/2),\quad \text{and}\quad \eta^\pa(\Omega_j)=\frac{\delta_{j-1}+2\left(1+\delta_1+\ldots+\delta_{j-2}\right)}{\delta_{j-1}}\sin(\theta/2).$$
Consequently, for $j=2,\dots,k$, we have
\begin{equation}\label{ah:eq3}\eta^\pa(\Omega_1)<\eta^\pa(\Omega_j)\end{equation}
and
\begin{equation}\label{ah:eq2}\eta^\pa(\Omega_j)=\sin(\theta/2)+O(\epsilon^{\frac{1}{(k-j+3)(k-j+2)}}).\end{equation}
This entails that
$$I_k(D_n)\le\max_{j=2,\dots,k}\eta^\pa(\Omega_j)\leq\sin(\theta/2)+C \epsilon^{\frac{1}{k(k+1)}}=\cos(\pi/n)+C \epsilon^{\frac{1}{k(k+1)}},$$
which establishes the claimed inequality by the arbitrariness of $\epsilon$.

\smallskip

Next, we show the reverse inequality, i.e. 
\begin{equation}
\label{ineq-2}
I_k(D_n)\geq \cos(\pi/n).
\end{equation}
Let $\tilde\A_k(D_n)$ denote the collection of all $k$-tuple $(\Om_1,\cdots,\Om_k)\in\A_k(D_n)$ such that \begin{itemize}
\item[(1)] {$\etap(\Om_j)<1,~1\le j\le k,$}
    \item[(2)] each $\Om_j$ is connected, and 
    \item[(3)]  the closure of each $\pa_I\Om_j$ consist of disjoint union of straight segments. {In particular, any two connected component of $\pa_I\Om_j$  have no intersection on $\pa D_n$.} 
\end{itemize}

{With a similar argument as in the proof of Theorem\,\ref{balls}, we show that}
\begin{equation}\label{ikdn}
    I_k(D_n)=\ \inf_{(\Om_1,\cdots,\Om_k)\in\tilde\A_k(D_n)}\max_j\etap(\Om_j).
    \end{equation}
{Having inequality \eqref{ineq-1}, w.l.o.g we can assume that all $\Om_j$ satisfy (1).\\ By a verbatim argument as on page \pageref{6}, we can assume that each $\Om_j$ is connected. Assume that the interior boundary of some of $\Om_j$ is not a collection of disjoint straight segments with endpoints on the boundary of $D_n$. We show that we can replace $(\Om_1,\cdots,\Om_k)$ with a new collection $(\tilde\Om_1,\cdots,\tilde\Om_k)\in \tilde\A_k(D_n)$ such that $\max_j\etap(\Om_j)\ge \max_j\etap(\tilde\Om_j)$. 
We consider two cases:
\textit{Case 1.}~ The two endpoints of each connected component of $\pa_I\Om_j$, $1\le j\le k$, belong to distinct edges of $D_n$. Then we can simultaneously substitute all $\Om_j$ by $\tilde\Om_j$ whose interior boundary consists of disjoint union of straight segments. Clearly $(\tilde\Om_1,\cdots,\tilde\Om_k)\in \tilde\A_k(D_n)$ and as explained on page \pageref{6}, we have $\etap(\Om_j) \ge \etap(\tilde\Om_j)$. \\
\begin{figure}
    \centering
\begingroup%
  \makeatletter%
  \providecommand\color[2][]{%
    \errmessage{(Inkscape) Color is used for the text in Inkscape, but the package 'color.sty' is not loaded}%
    \renewcommand\color[2][]{}%
  }%
  \providecommand\transparent[1]{%
    \errmessage{(Inkscape) Transparency is used (non-zero) for the text in Inkscape, but the package 'transparent.sty' is not loaded}%
    \renewcommand\transparent[1]{}%
  }%
  \providecommand\rotatebox[2]{#2}%
  \newcommand*\fsize{\dimexpr\f@size pt\relax}%
  \newcommand*\lineheight[1]{\fontsize{\fsize}{#1\fsize}\selectfont}%
  \ifx\svgwidth\undefined%
    \setlength{\unitlength}{434.0747503bp}%
    \ifx\svgscale\undefined%
      \relax%
    \else%
      \setlength{\unitlength}{\unitlength * \real{\svgscale}}%
    \fi%
  \else%
    \setlength{\unitlength}{\svgwidth}%
  \fi%
  \global\let\svgwidth\undefined%
  \global\let\svgscale\undefined%
  \makeatother%
  \begin{picture}(1,0.32310719)%
    \lineheight{1}%
    \setlength\tabcolsep{0pt}%
    \put(0,0){\includegraphics[width=\unitlength,page=1]{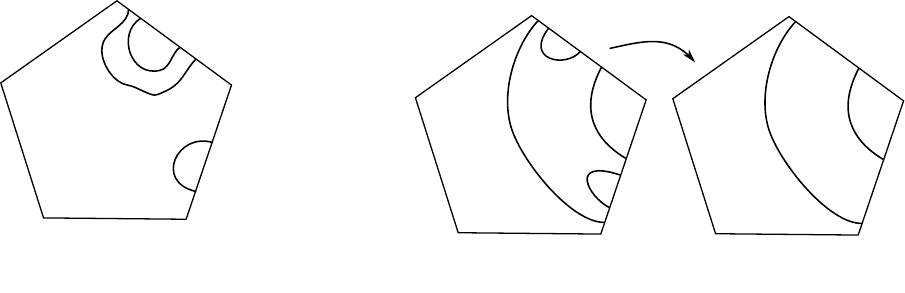}}%
    \put(0.11292253,0.013){\color[rgb]{0,0,0}\makebox(0,0)[lt]{\lineheight{1.25}\smash{\begin{tabular}[t]{l}$(a)$\end{tabular}}}}%
    \put(0.72306557,0.013){\color[rgb]{0,0,0}\makebox(0,0)[lt]{\lineheight{1.25}\smash{\begin{tabular}[t]{l}$(b)$\end{tabular}}}}%
    \put(0,0){\includegraphics[width=\unitlength,page=2]{polygonD.pdf}}%
  \end{picture}%
\endgroup%

    \caption{Domains in (a) do not satisfy property (1). The domain on the left in (b)  can be replace by a new domain (the domain on the right) such that each component of the interior boundary has endpoints on distinct edges.  }
    \label{fig:polygonD}
\end{figure}
\textit{Case 2.}~ Some of the connected components of $\pa_I\Om_j$ have two endpoints on the same edge of $D_n$.  We can immediately rule out the situation when $\pa_E\Omega_j$ only consists of one or two connected components with all endpoints on the same edge, see Figure \ref{fig:polygonD}(a). Because in this case $\etap(\Omega_j)>1$. Hence, since we have more than one domain in the collection, at least  one of the components of  $\pa_I\Omega_j$ has endpoints  on two distinct edges. For each $\Om_j$, we can remove those boundary components whose endpoints are on the same edge and get a new collection of domains in $\mathcal{A}_k(D_n)$ which satisfies the assumption of Case 1, see Figure \ref{fig:polygonD}(b).  This process increases the length of the exterior boundary and decreases the length of the interior boundary of each $\Omega_j$. Now we do the same procedure as in Case 1 to get a new collection $(\tilde\Om_1,\cdots,\tilde\Om_k)\in \tilde\A_k(D_n)$ satisfying  $\etap(\Om_j) \ge \etap(\tilde\Om_j)$. Therefore, we have identity~\eqref{ikdn}.
\\}
To prove inequality \eqref{ineq-2}, we show that any $k$-tuple $(\Om_1,\cdots,\Om_k)\in\tilde\A_k(D_n)$ contains an element $\Omega_j$ such that $\eta^{\pa}(\Omega_j)\geq \cos(\pi/n)$.

\smallskip

For the considerations below, it is convenient to introduce the following notation:
we say that a domain $\Omega\subset D_n$ is of type $\ell$, if
$\pa_E\Omega$ has components in exactly $\ell$ edges of $D_n$.
Notice that each $\Om\in \tilde\A(D_n):=\tilde\A_1(D_n)$ is of type $\ell\ge2$.\\ {Lemma\,\ref{lemma-type} implies that
there are at most $n-2$ disjoint domains $\Omega_j$ which are of type $\ell$ with $\ell \geq 3$.}
Consequently, any $k$-tuple $(\Om_1,\cdots,\Om_k)\in\tilde\A_k(D_n)$ with $k\geq n$, has at least two elements of type $2$. Assume w.l.o.g. that  $\Omega_1$ is of type $2$.
If the boundary components of $\Omega_1$ lie on edges which are not adjacent {(it can be only the case when $n\ge4$)}, then we have $\eta^{\pa}(\Omega_1)\geq 1$ - compare Figure\,\ref{fig4}.
\begin{figure}[ht]
    \centering
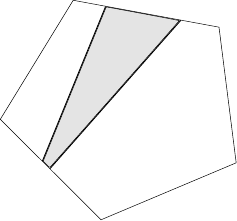
    \caption{Type $2$ domain in $D_5$ with non-adjacent edges.}
    \label{fig4}
\end{figure}
Thus, we assume w.l.o.g. that the boundary components of $\Omega_1$ lie on adjacent edges $e_1$ and $e_2$.
Denote the corner which is the intersection point of the edges $e_1$ and $e_2$ by $C$.
There are now two possible cases: 
either $C\in\pa_E\Om_1$ or $C\notin\pa_E\Om_1$
- compare Figure\,\ref{fig5}.
\begin{figure}[ht]
    \centering
  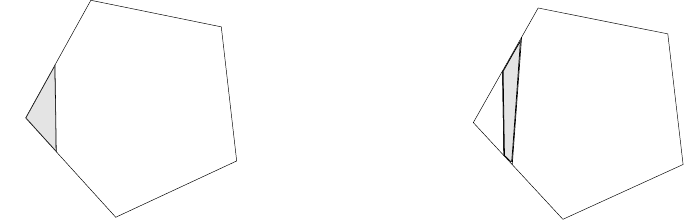
    \caption{$C$ covered by a triangle - $C$ not covered at all.}
    \label{fig5}
\end{figure}
If $C\notin\pa_E\Om_1$ then
there exists a domain $\Omega_j$ of the $k$-tuple $(\Om_1,\cdots,\Om_k)$ which is of type $2$ and closest to $C$. Notice that $\Om_j$ can be equal to $\Om_1$. 
Let $T$ be the triangle which covers $C$ and two of its edges are subset of $e_1$ and $e_2$ and the third one is subset of $\pa_I\Om_j$.
We can then substitute $\Omega_j$ by the union of $\Omega_j$ and $T$
since $\eta^{\pa}(\Omega_j)\geq \eta^{\pa}(\Omega_j\cup T)$ by construction.
Note that $\Omega_j\cup T$ is a triangle covering the corner $C$. An elementary calculation shows $\eta^{\pa}(\Omega_j\cup T)$ attains its minimum $\cos(\pi/n)$ when $\Omega_j\cup T$ is  an isosceles triangle. We conclude that  for any given $k$-tuple $(\Om_1,\cdots,\Om_k)\in \tilde\A_k(D_n)$, we have $$\max_j\eta^\pa(\Om_j)\ge \cos(\pi/n).$$
This establishes inequality (\ref{ineq-2}).

\smallskip

To complete the proof of (i), 
we are left with proving the inequality 
\begin{equation}
\label{ineq}
\cos\left(\pi/n\right)\le I_k(\DD)=\sin(\pi/k)k/\pi
\end{equation}
for all $k\geq n$.
For this purpose, we introduce the function 
$$f:\R^{+}\rightarrow\R,\qquad x\mapsto\sin(\pi/x)/(\pi/x),$$
which 
satisfies $f(k)=I_k(\DD)$ for $k\in\N$.
Since $f$ is strictly increasing it is sufficient to prove the inequality (\ref{ineq}) for $k=n$. This is however equivalent to the inequality $\tan(\pi/n)\geq \pi/n$ which clearly holds for all $n\geq 3$. Thus, $(i)$ is established.

\bigskip

We now turn to the proof of (ii) and start by showing
the inequality 
\begin{equation}
\label{ineq1}    
I_k(D_n)\leq\sin(\pi/k)\cot(\pi/n)k/n.
\end{equation}
It is sufficient to construct a $k$-tuple $(\Omega_1,\cdots,\Om_k)\in \tilde\A_k(D_n)$ such that the inequality
$\max\eta^{\pa}(\Omega_j)\leq\sin(\pi/k)\cot(\pi/n)k/n$ is satisfied.
We inscribe a regular $k$-gon into the $n$-gon as follows: 
consider the midpoints of the edges of the $n$-gon.
 The vertices of the $k$-gon are given by taking every ${m}$-th of the midpoints. {Recall that $n=mk$ by the assumption.} The $k$-gon hence divides $D_n$ into $k+1$ domains.
Those of these domains which cover the corners of $D_n$ yield
a $k$-tuple ${(\Omega_1,\dots,\Omega_k)}\in\A_k(D_n)$.
See Figure\,\ref{fig6} in which the situation is demonstrated for $n=6$ and $k=3$.
\begin{figure}[ht]
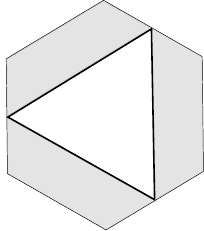
	\caption{$k$-tuple for $n=6$ and $k=3$.}\label{fig6}
\end{figure} 

{All $\Om_j$ are congruent to a domain which we denote  by \label{tildeom}$\tilde\Om$. The exterior boundary  of $\tilde\Om$} is given by ${m}$ times the length of one side of the $n$-gon $D_n$. Straightforward elementary calculations yield 
$${\lvert\pa_E\tilde\Om\lvert}=2{m}\sin(\pi/n)\qquad\mbox{and}\qquad
\lvert{\pa_E\tilde\Om}\lvert=2\cos(\pi/n)\sin(\pi/k).$$
Therefore,
\begin{equation}
   { \eta^\pa(\Om_j)=\eta^\pa(\tilde\Om)}=\sin(\pi/k)\cot(\pi/n)k/n
\end{equation}
and it proves (\ref{ineq1}).
We are thus finally left with proving
$\sin(\pi/k)\cot(\pi/n)k/n\le I_k(\DD)$. This is an immediate consequence of 
$\cot(\pi/n)\pi/n\leq 1$ and Theorem\,\ref{balls}.\\

We now prove the equality case. Let $(\Om_1,\cdots,\Om_k)$ be a $k$-tuple in $\tilde\A_k(D_n)$ such that
\begin{equation}\label{starom}\max_j\eta^{\pa}(\Omega_j)\leq\sin(\pi/k)\cot(\pi/n)k/n.\end{equation} W.l.o.g. we assume that  all vertices are covered. We consider two cases.

\noindent\textit{Case 1.~} Assume that for every $j\in\{1,\cdots,k\}$, the exterior boundary $\extS_j$ is connected. Then there exists at least one $j$ such that $\frac{|\pa D_n|}{n}\le |\extS_j|\le|\pa D_n|/k=|\pa_E\tilde\Om|$.  By Proposition~\ref{mainapp},  and inequalities \eqref{ah1} and \eqref{ah2} in Appendix, we get $\eta^\pa(\Om_j)\ge \sin(\pi/k)\cot(\pi/n)k/n$. Hence, we have  equality in \eqref{starom}. \\   
\noindent\textit{Case 2.~} Assume that there is at least one $j_0$ for which $\extS_{j_0}$ is not connected. {By assumption, all vertices are covered. Hence, in each connected component of  $D_n\setminus\Om_{j_0}$, there is at least one domain $\Omega_j$ with connected $\extS_j$. If  $|\pa_E\Om_j|\le |\pa_E\tilde\Om|$ for some  $\Om_j$, we repeat the same argument as in Case 1. Let us assume that for all $\Om_j$ with connected exterior boundary, $|\pa_E\Om_j|>|\pa_E\tilde\Om|$.     We claim} that the length of each connected component of $\pa_I\Om_{j_0}$is bounded below by $2\cos(\pi/n)\sin(\pi/k)$. {Let $L$ be a connected component of $\pa_I\Om_{j_0}$. It divides $\pa D_n$ into two parts $L_1$ and $L_2$.  Note that in each component  of $D_n\setminus L$, there is  $\Omega_j$ with connected $\pa_E\Om_j$. Hence, $|L_i|>|\pa_E\tilde\Om|$, $i=1,2$. Let $L_1$ be the one with $|L_1|\le|\pa D_n|/2$ and consider the domain $\Om$ enclosed by $L$ and $L_1$. Then we can use Proposition \ref{mainapp} in Appendix (taking $\Om_0=\tilde\Om$)  to get $$|L|=|\pa_I\Om|\ge |\pa_I\tilde\Om|=2\cos(\pi/n)\sin(\pi/k). $$ 
Since $L$ was arbitrary, the above inequality holds for each connected component of $\pa_I\Om_{j_0}$.}
{Let $m\ge2$ be the number of connected components of $\pa_I\Om_{j_0}$.} Then we have $|\intS_{j_0}|\ge 2m\cos(\pi/n)\sin(\pi/k)$. By assumption, $\eta^\pa(\Om_{j_0})\le \sin(\pi/k)\cot(\pi/n)k/n$ which implies
$$|\extS_{j_0}|\ge m|{\pa D_n}|/k.$$ 
As a result we get
$\sum_{j=1}^k|\extS_j|>|\pa D_n|$ which is impossible. 
Therefore, condition \eqref{starom} only holds in Case 1.
This completes the proof of (ii).
\end{proof}

\subsection{Escobar constants $I_k$ for Euclidean and curvilinear polygons}
\label{polygons}
In this subsection we first give upper bounds for $I_k(M)$ where $M$ is a Euclidean $n$-gon, i.e. we prove Theorem\,\ref{intro-polygon}. Afterwards, we generalize these considerations to curvilinear polygons, i.e. we show Theorem\,\ref{curvilinear}.

\smallskip

\begin{proof}[\textbf{Proof of Theorem\,\ref{intro-polygon}}]
The idea of the proof is the same as the proof of inequality \eqref{ineq-1} (see the proof of Theorem\,\ref{regular-polygons}), i.e. the domains of a $k$-tuple will concentrate at the corner with the smallest interior angle.

\smallskip

Clearly, there exists $1\le n_0\le n$ such that $$0<\theta_1\le\theta_2\le\cdots\le\theta_{n_0}<\pi<\theta_{n_0+1}\le\dots\le\theta_n<2\pi.$$
For every $\theta\in\{\theta_j:1\le j\le n_0\}$, we consider the $k$-tuple $(\Omega_1,\cdots, \Om_k)$ illustrated in Figure\,\ref{fig} near the corner of  angle $\theta$.
Consequently, we get the same estimate as in \eqref{ah:eq3} and \eqref{ah:eq2}:
$$\eta^\pa(\Omega_j)=\sin(\theta/2)+O(\epsilon^{\frac{1}{(k-j+3)(k-j+2)}}),\qquad 1<j\le k.$$
This entails that
$$I_k(M)\leq\max_{j=2,\dots,k}\eta^\pa(\Omega_j)\le\sin(\theta/2)+O(\epsilon^{\frac{1}{(k+1)k}}).$$
By the arbitrariness of $\epsilon$, we thus get
$$I_k(M)\leq\max_{j=2,\dots,k}\eta^\pa(\Omega_j)\le\sin(\theta/2).$$
Therefore, we have
$$I_k(M)\le \displaystyle{\min_{1\le j\le n_0}} \sin(\theta_j/2)=\sin(\theta_1/2).$$  Since the sum of interior angles of any $n$-gon is $(n-2)\pi$, the maximum value for $\theta_1$ is attained when $M$ is a regular $n$-gon and it is equal to $\frac{(n-2)\pi}{n}$. 
Consequently, we obtain
$$\sin (\theta_1/2)\le \sin\left(\pi/2-\pi/n\right)=\cos\left(\pi/n\right),$$
which establishes the claim.
\end{proof}

In what follows let $\cP_n$ be the family of $n$-gons.

\begin{corollary}\label{cor1}
For every $M\in \cP_n$, we have
    the inequality
\begin{equation}\label{conj2}I_k(M)\le I_k(D_n)\le I_k(\DD),\qquad \mbox{for all}\quad k\ge n.\end{equation}
\end{corollary}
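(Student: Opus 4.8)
The plan is to chain together results already established in this excerpt. By Theorem~\ref{intro-polygon}, for any $M\in\cP_n$ we have $I_k(M)\le\sin(\theta_1/2)$ for all $k\ge 3$, where $\theta_1$ is the smallest interior angle of $M$. Since the interior angles of an $n$-gon sum to $(n-2)\pi$, the smallest angle satisfies $\theta_1\le (n-2)\pi/n$, and hence $\sin(\theta_1/2)\le\sin(\pi/2-\pi/n)=\cos(\pi/n)$. This gives the first inequality $I_k(M)\le\cos(\pi/n)$, valid for every $k\ge 3$.

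The next step is to identify the right-hand side with $I_k(D_n)$ in the regime $k\ge n$. By Theorem~\ref{regular-polygons}(i), we have the exact identity $I_k(D_n)=\cos(\pi/n)$ for all $k\ge n$. Combining this with the bound from the previous paragraph yields $I_k(M)\le\cos(\pi/n)=I_k(D_n)$ for all $k\ge n$, which is the first half of \eqref{conj2}.

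Finally, the second inequality $I_k(D_n)\le I_k(\DD)$ is again immediate from Theorem~\ref{regular-polygons}(i), which states precisely $I_k(D_n)=\cos(\pi/n)\le I_k(\DD)$ for all $k\ge n$; alternatively, the inequality $\cos(\pi/n)\le\sin(\pi/k)k/\pi=I_k(\DD)$ for $k\ge n$ is the content of inequality~\eqref{ineq} established in the proof of Theorem~\ref{regular-polygons}. Chaining the two gives \eqref{conj2}.

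There is essentially no obstacle here: the corollary is a direct consequence of Theorems~\ref{regular-polygons} and~\ref{intro-polygon}, and the only point requiring any care is making sure all three pieces are invoked in the same range $k\ge n$, since Theorem~\ref{intro-polygon} already establishes the maximal-angle estimate $\sin(\theta_1/2)\le\cos(\pi/n)$ that aligns the polygon bound with the regular-polygon value. I would therefore write the proof as a two-line deduction, citing the relevant statements directly rather than reproving any inequality.
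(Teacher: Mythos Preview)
Your proposal is correct and follows exactly the approach the paper intends: the corollary is an immediate consequence of Theorem~\ref{intro-polygon} (giving $I_k(M)\le\sin(\theta_1/2)\le\cos(\pi/n)$) combined with Theorem~\ref{regular-polygons}(i) (giving $I_k(D_n)=\cos(\pi/n)\le I_k(\DD)$ for $k\ge n$). The paper states it without proof precisely because it is this two-line deduction.
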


\begin{remark}
\begin{enumerate}
\item 
The statement of Theorem\,\ref{intro-polygon} can be extended to any polygon in the hyperbolic plane. The proof uses the 
     same $k$-tuple as in the proof of Theorem \ref{intro-polygon} and the Toponogov theorem.
     \item Note that we have  equality in Theorem \ref{intro-polygon} when $M$ is a regular polygon and $k\ge n$.
However, for any fixed $k\ge2$ there is no lower bound for $I_k(M)$, compare Remark\,\ref{rem-1}.
\end{enumerate}
\end{remark}

It is an interesting question whether among all $n$-gons, $I_k$ is maximized by $D_n$ for all $k\ge2$. 
\begin{open}Let $D\in\cP_n$. Does  inequality 
\begin{equation*} 
I_k(D)\le I_k(D_n), 
\end{equation*}
hold {for all} $k\ge 2$?
\end{open}
From Theorem \ref{regular-polygons}, we know that it holds for $k\ge n$ and $k=3,4$.\\

\smallskip

The following theorem  demonstrates that if we fix $M\in\cP_n$ then  we have equality in Theorem \ref{intro-polygon} when \lq $k$ is large enough\rq.

\begin{theorem}
Let $M\in\cP_n$ be given. There exists $n_0\ge n$ such that $I_k(M)=\sin(\theta_1/2)$ for all $k\geq n_0$.
Here $\theta_1$ is given as in Theorem\,\ref{intro-polygon}.
\end{theorem}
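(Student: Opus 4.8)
The plan is to prove the two matching inequalities $I_k(M)\le \sin(\theta_1/2)$ and $I_k(M)\ge\sin(\theta_1/2)$, where the upper bound is already supplied by Theorem~\ref{intro-polygon} and holds for every $k\ge 3$. Thus the entire content of this theorem lies in establishing the reverse inequality $I_k(M)\ge\sin(\theta_1/2)$ for all sufficiently large $k$. The strategy mirrors the lower-bound argument for regular polygons in the proof of Theorem~\ref{regular-polygons}(i): I would show that once $k$ is large, any admissible $k$-tuple is forced to contain an element $\Omega_j$ whose isoperimetric ratio is at least $\sin(\theta_1/2)$, because the corner of smallest angle $\theta_1$ must be ``covered'' in an essentially triangular way.

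The key steps I would carry out are as follows. First, as in Theorem~\ref{regular-polygons}, reduce to the case where every $\Omega_j$ is connected with interior boundary $\Sigma_j^\circ$ consisting of straight segments, and classify each $\Omega_j$ by its \emph{type} $\ell$, meaning $\partial\Omega_j\cap\partial M$ meets exactly $\ell$ edges of $M$. Every type-$1$ domain has $\eta^\partial\ge 1\ge\sin(\theta_1/2)$, so if any type-$1$ domain appears we are done; hence I may assume all $\Omega_j$ have type $\ge 2$. Next I would argue a counting/covering statement: there are only $n$ corners and $n$ edges, and (by an induction on $\ell$ exactly as in the regular case) the number of domains of type $\ge 3$ is bounded by $n-2$, independent of $k$. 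Consequently, for $k$ large there must be many type-$2$ domains, and in particular \emph{every} corner, including the corner $C_1$ of smallest angle $\theta_1$, must be covered by some domain lying in the wedge at that corner. The crucial geometric computation is that a domain covering a corner of angle $\theta$ with a straight interior boundary has $\eta^\partial$ minimized, over all such covering domains of a given type, by the isosceles triangle cutting off that corner, for which $\eta^\partial=\sin(\theta/2)$; this is the same elementary triangle estimate used for $\cos(\pi/n)=\sin(\theta/2)$ in the regular case. Applying this at $C_1$ yields an element with $\eta^\partial\ge\sin(\theta_1/2)$.

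The point where $k$ large genuinely enters, and where I expect to have to be careful, is in forcing the corner of smallest angle to be covered by a domain confined to its wedge rather than by a ``long and thin'' stripe that evades the corner estimate. For a fixed $k$ the example of the long thin rectangle shows this can fail, so I would quantify this: since the total exterior boundary satisfies $\sum_j|\Sigma_j^\partial|\le|\partial M|$ and each admissible $\Omega_j$ satisfying $\eta^\partial(\Omega_j)\le\sin(\theta_1/2)$ must have exterior boundary length bounded below by some fixed $c(M)>0$ (because $\sin(\theta_1/2)<1$ forces $|\Sigma_j^\partial|$ comparable to $|\Sigma_j^\circ|$, and a domain with both boundaries short near a corner is geometrically constrained), the number of domains with small ratio is bounded by $|\partial M|/c(M)$. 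Choosing $n_0>|\partial M|/c(M)$ then guarantees, for $k\ge n_0$, that at least one $\Omega_j$ violates $\eta^\partial(\Omega_j)\le\sin(\theta_1/2)$, whence $I_k(M)\ge\sin(\theta_1/2)$.

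The main obstacle will be making the covering/forcing argument at the smallest corner fully rigorous without control on the edge lengths, since unlike the regular case the edges adjacent to $C_1$ may be short, so a type-$2$ domain at $C_1$ could spill over onto further edges or fail to be triangular. I would handle this by the same ``add a triangle $T$ covering $C_1$ and use $\eta^\partial(\Omega_j\cup T)\le\eta^\partial(\Omega_j)$'' substitution as in Theorem~\ref{regular-polygons}, reducing to the isosceles-triangle case, and by invoking the length lower bound $c(M)$ to pin down $n_0$ explicitly in terms of $M$. The delicate bookkeeping is ensuring the substitutions preserve disjointness and do not increase the maximum ratio over the tuple, but this is exactly parallel to the regular-polygon proof and should go through.
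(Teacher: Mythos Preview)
Your third paragraph is the right argument and is essentially what the paper does, but your second and fourth paragraphs take an unnecessary detour that would not work as stated. The claim that ``every corner, including $C_1$, must be covered by some domain lying in the wedge at that corner'' does not follow from having many type-$2$ domains, and trying to force coverage specifically at the corner of angle $\theta_1$ is both unjustified and not needed.

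The simplification you are missing is this: a type-$2$ domain $\Omega_j$ whose exterior boundary meets two \emph{adjacent} edges enclosing angle $\theta_j$ satisfies $\eta^\partial(\Omega_j)\ge\sin(\theta_j/2)\ge\sin(\theta_1/2)$, regardless of which corner it sits at, since $\theta_1$ is the smallest angle. So if \emph{any} type-$2$ domain has adjacent edges, you are already done; there is no need to single out $C_1$. The remaining case is that every type-$2$ domain has its exterior boundary on non-adjacent edges. In that case, letting $d>0$ be the minimum distance between any pair of non-adjacent edges of $M$, each such $\Omega_j$ has $|\Sigma_j^\circ|\ge 2d$, and if $\eta^\partial(\Omega_j)<\sin(\theta_1/2)$ then $|\Sigma_j^\partial|>2d/\sin(\theta_1/2)$. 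Since there are at most $n-2$ domains of type $\ge 3$, at least $k-(n-2)$ domains are of type $2$, and their exterior boundaries are disjoint subsets of $\partial M$; this bounds $k$ and gives a contradiction for $k$ large. This is exactly your $c(M)$ argument, made precise by identifying $c(M)=2d$ via the non-adjacent-edge dichotomy; your parenthetical ``a domain with both boundaries short near a corner is geometrically constrained'' is vague and, taken literally, false (tiny triangles at the $\theta_1$ corner have $\eta^\partial$ exactly $\sin(\theta_1/2)$ with arbitrarily short boundaries), but the corrected version above is what the paper uses.
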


\begin{proof}
As in the proof of Theorem\,\ref{regular-polygons}, we say that a domain $\Omega\subset M$ is of type $\ell$, if
{$\pa_E\Omega$} has components in exactly $\ell$ edges of $M$.
{By Lemma\,\ref{lemma-type} there can be at most $n-2$ domains $\Omega_j$ which are of type $\ell$ with $\ell \geq 3$.}
Hence, any $k$-tuple $(\Omega_1,\cdots,\Omega_k)\in A_k(M)$ with $k\geq n$, has at least two elements of type $2$.
Assume w.l.o.g. that $\Omega_1$ is of type $2$.
If the boundary components of $\Omega_1$ lie on adjacent edges $e_1$ and $e_2$, then proceed as in the proof of Theorem\,\ref{regular-polygons} to get
$\eta^{\pa}(\Omega_1)\geq \sin(\theta_j/2)\geq\sin(\theta_1/2),$ where $\theta_j$ denotes the angle enclosed by $e_1$ and $e_2$.
Therefore, we assume w.l.o.g. that the boundary components of $\Omega_1$ lie on non-adjacent edges $e_1$ and $e_2$. Let $d=\mbox{dist}(e_1,e_2)$. Thus the interior boundary of $\Omega_1$ is at least $2d$. When $k$ increases, the number of type $2$ domains necessarily increases. Consequently, there can not exist a uniform lower bound on the lengths of their exterior boundaries.
Hence, for large enough $k$, there has to exist a domain $\Omega_i$ of type $2$ with exterior boundary less than $2d/ \sin(\theta_1/2)$.
Consequently,  $\eta^{\pa}(\Omega_i)\geq \sin(\theta_1/2)$. By Theorem~\ref{intro-polygon}, we know $I_k(M)\le\sin(\theta_1/2)$. This establishes the claim.
\end{proof}

Finally, we generalize the preceding considerations to curvilinear polygons and prove Theorem \ref{curvilinear}. \begin{figure}
    \centering
    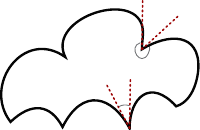
    \caption{A curvilinear 6-gon.}
    \label{ah:curvilinear}
\end{figure}

{ Let us first recall the definition of a curvilinear polygon. Let $M$ be a  simply connected domain in $\R^2$ whose boundary $\partial M$ consists of union $n$-smooth curve and $n$ vertices. The vertices are point were two smooth parts of the boundary meet and the angle of each vertex is in $(0,2\pi)\setminus\{\pi\}$, see Figure \ref{ah:curvilinear}. We call such domain $M$ a \textit{curvilinear $n$-gon}.}

\begin{proof}[\textbf{Proof of Theorem\,\ref{curvilinear}}]
 The strategy is essentially the same as that of
  the proof of Theorem \ref{regular-polygons}.
  Since the boundaries of the polygons are now possibly more general curves than lines, we need to modify the construction a bit.  
  
  \smallskip 

  For every $\varepsilon>0$, there exists a neighborhood $U\subset M$ of the vertex $\mathcal{V}$ with interior angle $\theta_1$ which lies inside a cone $C$ of angle $\theta_1+\varepsilon$ centered at $\mathcal{V}$, see Fig \ref{fig:curvilinear}.
  Let $\mathcal{C}_1,\dots,\mathcal{C}_k$ be the circles centered at 
  $\mathcal{V}$ with radii $\delta_0\epsilon, (\delta_0+\delta_1)\epsilon,\cdots,\sum_{j=0}^k\delta_j\epsilon$ respectively.
  Here $\delta_0=1$,
  $\delta_j:=\epsilon^{-\frac{1}{k-j+1}}$ for $j=1,\dots, k$ and $\epsilon>0$ is chosen small enough such that the intersection with $M$ is contained in $U$. Denote by $E_1$ and $E_2$ the two edges of the cone. Among the intersection points of $M$ and $\mathcal{C}_j$, we choose
   $p_1^j$ and $p_2^j$ to be  the closest point to $E_1$ and $E_2$ respectively.  Further, denote by $\ell_j$  straight line which connects $p_1^j$ and $p_2^j$. We now define the $k$-tuple $(\Om_1,\cdots,\Om_k)$. Let  $\Omega_j$, $1\le j\le k$,   be the intersection of the strip between  $\ell_j$ and $\ell_{j-1}$ and $M$. Note that $\ell_0$ is a point and is equal to $\mathcal{V}$. The interior boundary of $\Omega_j$ consists of union of segments which are subset of $\ell_j\cup\ell_{j-1}$. Hence, $\partial\Omega_j\cap M=|\intS_j|\le |\ell_j|+|\ell_{j-1}|.$ It is clear that the length of  the exterior boundary $|\extS_j|\ge 2\epsilon\delta_{j}$.  
  Also see Fig \ref{fig:curvilinear}, where $\Om_1$ and $\Om_2$ are depicted. The interior boundary of $\Om_2$ is given by the two blue lines.

  \smallskip
  
Next we construct a $k$-tuple $(\Om_1^C,\dots, \Om_k^C)$ for the cone $C$. In analogy to the above construction, the intersection points of the circles $\mathcal{C}_j$ with the cone $C$ give rise to the exterior boundaries of $\Om_j^C$ -- see  Fig \ref{fig:curvilinear}, where they are depicted in red. The exterior boundary of $\Om_j^C$ is then simply given by the Euclidean distance between $\mathcal{C}_j$ and $\mathcal{C}_{j-1}$ which is $2\epsilon\delta_j$. Note that we assume $\mathcal{C}_0=\mathcal{V}$.
Since $U\subset C$,  the interior boundary of $\Om_j$ is smaller {than or equal to that of} $\Om_j^C$.
Further, the exterior boundary of $\Omega_j$ is larger than that of $\Om_j^C$. Hence, $\etap(\Omega_j)\le\etap(\Omega^C_j) $.
  Therefore,
\[I_k(M)\le\sin\left(\frac{\theta_1+\varepsilon}{2}\right),\]
which establishes
the claim, by arbitrariness of $\epsilon$.
\begin{figure}
    \centering
    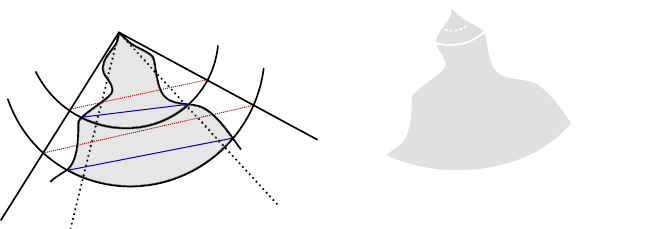
    \caption{$k$-tuple for a curvilinear polygon near a vertex.}
    \label{fig:curvilinear}
\end{figure}
\end{proof}

\smallskip
\section*{Appendix: Calculation of $\eta^\pa$ for a domain in $D_n$}
Consider $\Omega\subset D_n$ with connected exterior boundary $\extS$ of length $L=|\extS|$. We define the \textit{type I symmetrization} of $\Om$ to be the domain $\Om^{\tiny\hexstar}$ with the same exterior length $L$, obtained by the following symmetrization process.
Pick the mid-point $p$ of an edge of $D_n$  and mark two points $A,B$ on the boundary with same distance from $p$ such that the part {$\mathcal{B}_{AB}$} of the boundary of $\pa D_n$ which lies between $A$ and $B$ has length $L$. We denote by $\Om^{\tiny\hexstar}$ the connected domain  whose interior boundary is the segment $AB$ and its exterior boundary is { given by $\mathcal{B}_{AB}$.}
Similarly, we define the \textit{type II symmetrization} of $\Om$ to be a domain $\Om^\star$ when instead of taking $p$ to be the mid-point of an edge, we choose $p$ to be a vertex and $A,B$ on the boundary with same distance from $p$ such that the part of the boundary $\pa D_n$ between $A$ and $B$ has length $L$.
\begin{proposition}\label{propappendix}
Let $\Omega\in \tilde\A(D_n)$ be a domain with connected exterior boundary $\extS$ of length $L=|\extS|\le|\pa D_n|/2$. Let $\Om^{\tiny \hexstar}$  and $\Om^\star$ with $|{\extS^{\tiny \hexstar}}|=|{\extS^\star}|=L$ be the type I and type II symmetrization of $\Om$ respectively. Then we have
\[{|\pa_I\Om|\ge \min\{|\pa_I\Om^{\tiny \hexstar}|,|\pa_I\Om^\star|\}.}\]
In particular,
\[\eta^\pa(\Om)\ge \min\{\eta^\pa(\Om^{\tiny \hexstar}),\eta^\pa(\Om^\star)\}.\]
\end{proposition}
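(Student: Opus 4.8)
The plan is to reduce the statement to a one–parameter optimization of a chord length along $\pa D_n$ and to show that the minimum is attained at one of the two symmetric positions.

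\emph{Step 1 (reduction to a chord).} Since $\extS$ is connected, $\pa\Om=\extS\cup\intS$ is a simple closed curve and $\intS$ joins the two endpoints $X,Y$ of the arc $\extS\subset\pa D_n$. As $D_n$ is convex, the straight segment $\overline{XY}$ lies in $D_n$ and has length at most $|\intS|$, whence
\[
\eta^\pa(\Om)=\frac{|\intS|}{|\extS|}\ge\frac{|\overline{XY}|}{L}.
\]
Both symmetrizations realize their chord exactly, i.e.\ $\eta^\pa(\Om^{\hexstar})=|A^{\hexstar}B^{\hexstar}|/L$ and $\eta^\pa(\Om^{\star})=|A^{\star}B^{\star}|/L$. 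Hence it suffices to show that the chord joining the endpoints of a length-$L$ connected sub-arc of $\pa D_n$ is minimized, over all positions of the arc, by an arc centered either at an edge–midpoint (type I) or at a vertex (type II).

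\emph{Step 2 (symmetry reduction).} I would parametrize the arc by the position $t$ of its center along $\pa D_n$ and let $F(t)$ denote the resulting chord length. Rotating $D_n$ by $2\pi/n$ shows $F$ is periodic of period $a:=2\sin(\pi/n)$ (one edge), and the reflection symmetries of $D_n$ make $F$ even about every vertex and every edge–midpoint. Thus it is enough to analyze $F$ on the fundamental interval $[0,a/2]$ running from a vertex–center ($t=0$, type II) to the adjacent edge–midpoint–center ($t=a/2$, type I), and to prove $\min_{t\in[0,a/2]}F(t)=\min\{F(0),F(a/2)\}$.

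\emph{Step 3 (the derivative on smooth pieces).} On any $t$-interval for which the arc meets a fixed set of $m$ edges, write $X-Y=\sum_{i=1}^m\ell_i\tau_i$, where the $\tau_i$ are the consecutive unit edge–tangents (so $\la\tau_i,\tau_j\ra=\cos((j-i)\beta)$ with $\beta=2\pi/n$), the interior lengths equal $a$, and only $\ell_1,\ell_m$ vary, with $\dot\ell_1=-1$, $\dot\ell_m=+1$. Using that the coefficients $\cos((m-i)\beta)-\cos((i-1)\beta)$ are antisymmetric under $i\mapsto m+1-i$ and that all interior $\ell_i$ are equal, the sum $\la X-Y,\tau_m-\tau_1\ra$ collapses and gives
\[
\tfrac{d}{dt}F(t)^2=2(\ell_1-\ell_m)\bigl(\cos((m-1)\beta)-1\bigr).
\]
Since the second factor is nonpositive, the sign of $\tfrac{d}{dt}F^2$ is that of $-(\ell_1-\ell_m)$, vanishing only where $\ell_1=\ell_m$. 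Because the vertices are equally spaced, $\ell_1=\ell_m$ forces the vertex pattern along the arc to be symmetric about its center, i.e.\ $t\in\frac a2\mathbb{Z}$; this never occurs in the open interval $(0,a/2)$, so $F^2$ is strictly monotone on each smooth piece meeting at least two edges (on a single–edge piece $F\equiv L$ is the maximal value).

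\emph{Step 4 (corners are maxima, and conclusion).} It remains to control the finitely many $t\in(0,a/2)$ at which an endpoint crosses a vertex, where $F$ has a corner. At a crossing of the leading endpoint $X$ one has $\ell_m\uparrow a$ on the left and the new $\ell_m\downarrow 0$ on the right; substituting into the formula yields left–derivative $\ge0$ and right–derivative $\le0$, so the corner is a local maximum, and the mirror computation in $\ell_1$ treats a crossing of the trailing endpoint $Y$. Hence every interior critical configuration of $F$ is a local maximum, the only minima on $[0,a/2]$ sit at the endpoints $t=0,a/2$, and the displayed identity—hence the proposition—follows. I expect the main obstacle to be precisely this Step 4 bookkeeping at the non-smooth points: the clean derivative of Step 3 holds only piecewise, and one must check that the jumps of $\ell_1-\ell_m$ across vertex–crossings always bend $F$ downward on both sides, so that no interior position of the arc can beat both symmetric ones.
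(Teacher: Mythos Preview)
Your argument is correct and follows the same underlying idea as the paper's proof: both reduce, via Step~1, to showing that the chord $|\overline{XY}|$ joining the endpoints of a length-$L$ sub-arc of $\partial D_n$ is minimized when the arc is centered at a vertex or an edge midpoint. The paper's version is much terser---it asserts (without justification) that one can always slide $P,Q$ to the nearest symmetric configuration \emph{without either endpoint crossing a vertex}, and then says it is ``easy to check'' that the chord has decreased. Your Step~3 supplies exactly this missing monotonicity via the clean identity $\tfrac{d}{dt}F^2=2(\ell_1-\ell_m)(\cos((m-1)\beta)-1)$, and your Step~4 (vertex crossings are local maxima of $F$) is an alternative to the paper's vertex-avoidance claim. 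Note that your two ingredients are actually mutually constraining: strict monotonicity on each smooth piece together with ``left derivative $\ge0$, right derivative $\le0$'' at every corner forces at most one corner in $(0,a/2)$, which in turn recovers the paper's unproved assertion about choosing a crossing-free direction. In short, you have turned the paper's sketch into a complete argument.
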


\begin{proof}
Let $P$ and $Q$ be the intersection points of the interior boundary $\intS$ of $\Om$ with $\pa D_n$. W.l.o.g., we assume that $P$ and $Q$ belong to different edges of $D_n$. Let $P$ and $Q$ move clockwise or anticlockwise maintaining distance $L$ from each other along the boundary $\pa D_n$ until they make a symmetric configuration $\Om^{\tiny \hexstar}$ or $\Om^\star$ for a first time. Notice that their displacement $\delta$ will be between $0$ and the length of the half of an edge, see Figure \ref{fig:polygon1a}. We can always choose the moving direction of $P$ and $Q$ so that they do not cross a vertex.  Then it is easy to check that the length of dotted line in Figure \ref{fig:polygon1a} is less than $L$. The resulting domain, after moving $P$ and $Q$ to new positions, is either $\Om^{\tiny \hexstar}$ or $\Om^\star$. Therefore, the statement follows. 
\end{proof}
\begin{figure}
    \centering
    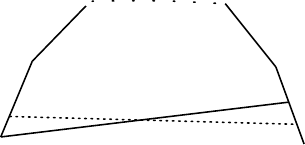
    \caption{Displacement of $P$ and $Q$ to get a symmetric configuration.}
    \label{fig:polygon1a}
\end{figure}
\begin{figure}
    \centering
    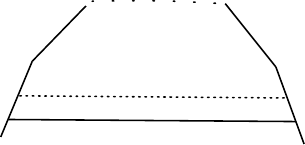
    \caption{}
    \label{fig:polygonb}
\end{figure}
If $|{\extS_1^{\tiny \hexstar}}|=L_1\le L_2=|{\extS_2^{\tiny \hexstar}}|\le|\pa D_n|/2$,
 see Figure \ref{fig:polygonb}, then an easy calculation shows that  
\begin{equation}\label{ah1}\eta^\pa(\Om_2^{\tiny \hexstar})\le\eta^\pa(\Om_1^{\tiny \hexstar}).
\end{equation}
The same holds if we replace ${\tiny \hexstar}$ by $\star$, i.e.
\begin{equation}\label{ah2}\eta^\pa(\Om_2^\star)\le\eta^\pa(\Om_1^\star).
\end{equation}

\begin{remark}
In some cases, we can calculate the minimum of $\eta^\pa(\Om^{\tiny \hexstar})$ and $\eta^\pa(\Om^\star)$ depending on the value of $L$. Let $s:=|\partial D_n|/n$ and $c:=\frac{2s\sin^2(\pi/n)}{1+\cos(\pi/n)-\cos^2(\pi/n)}$. Then
\[\begin{cases}
\eta^\pa(\Om^{\tiny \hexstar})\le\eta^\pa(\Om^\star)& c\le L\le 2s\\
\eta^\pa(\Om^{\tiny \hexstar})\ge\eta^\pa(\Om^\star)& L<c
\end{cases}.\]
Indeed, if $L\le 2s $, then $\Om^{\tiny \hexstar}$ is an isosceles trapezoid and $\Om^\star$ is an isosceles triangle with $\eta^\pa(\Om^\star)=\cos(\pi/n)$. Then we calculate  $\eta^\pa(\Om^{\tiny \hexstar})$ and find $c$ for which the statement above holds.
\end{remark}

\begin{proposition}\label{mainapp}
Let $\Omega_0$ {be a domain in $\tilde{\mathcal{A}}(D_n)$ with}  connected exterior boundary $\extS_0$ of length $s\le L_0\le \frac{sn}{2}$, where $s$ is the edge length of $D_n$. Assume that the endpoints of ${\extS_0^{\tiny \hexstar}}$ or of ${\extS_0^\star}$ position at the midpoint of two edges.  We denote the corresponding symmetrization by $\Om_0^*$. Then for any domain $\Om{\in\tilde{\mathcal{A}}(D_n)}$ with connected exterior boundary of length $L$ 
\begin{itemize}
    \item[(i)] with  $L \le L_0$ we have
\[\eta^\pa(\Om)\ge\eta^\pa( \Om_0^*);\]
{\item[(ii)] with $L_0\le L\le\frac{sn}{2}$ we have
\begin{equation*}\label{om0inq}|\pa_I\Om|\ge |\pa_I\Om_0^*|.\end{equation*}}
\end{itemize}
\end{proposition}
When $n=\ell k$ and  $L_0=\ell s$, then $\Om_0^*=\tilde\Om$, where $\tilde\Om$ is defined on page \pageref{tildeom} and $$\eta^\pa( \Om_0^*)=\sin(\pi/k)\cot(\pi/n)k/n.$$ 
\begin{proof} 
Let assume that the endpoints of ${\extS_0^{\tiny \hexstar}}$ position at the midpoint, i.e. $\Om_0^*=\Om_0^{\tiny \hexstar}$. Thus the endpoints of  ${\extS_0^\star}$ are two vertices of $D_n$. An easy calculation shows that  $\eta^\pa(\Om_0^{\star})\ge \eta^\pa(\Om_0^{{\tiny \hexstar}})$. Similarly, if $\Om_0^*=\Om_0^\star$, then one can show that $\eta^\pa(\Om_0^{\tiny\hexstar})\ge \eta^\pa(\Om_0^{{ \star}})$. Since $L\le L_0$, then we have \eqref{ah1} and \eqref{ah2}. We conclude
\[\min\{\eta^\pa(\Om^{\tiny \hexstar}), \eta^\pa(\Om^\star)\}\ge\min\{\eta^\pa(\Om_0^{{\tiny \hexstar}}), \eta^\pa(\Om_0^{\star})\}=\eta^\pa(\Om_0^*).\]
Combining with Proposition \ref{propappendix} proves the first inequality.\\
{We now prove the second inequality.   By Proposition \ref{propappendix}, we have
\[|\pa_I\Om|\ge \min\{|\pa_I\Om^{\tiny \hexstar}|,|\pa_I\Om^\star|\}|.\]
Since $\frac{s}{2}\le L_0\le L\le\frac{sn}{2}$,
 we get $|\pa_I\Om^\star|\ge|\pa_I\Om_0^\star|$ by considering $\Om^\star$ and $\Om_0^\star$ from a same vertex $p$.  Similarly, we have  $|\pa_I\Om^{\tiny \hexstar}|\ge|\pa_I\Om_0^{\tiny \hexstar}|$, see Figure \ref{fig:app} for an illustration. Therefore,  
 \[|\pa_I\Om|\ge  \min\{|\pa_I\Om_0^{\tiny \hexstar}|,|\pa_I\Om_0^\star|\}=|\pa_I\Om_0^*|.\]
 \begin{figure}
    \centering
\begingroup%
  \makeatletter%
  \providecommand\color[2][]{%
    \errmessage{(Inkscape) Color is used for the text in Inkscape, but the package 'color.sty' is not loaded}%
    \renewcommand\color[2][]{}%
  }%
  \providecommand\transparent[1]{%
    \errmessage{(Inkscape) Transparency is used (non-zero) for the text in Inkscape, but the package 'transparent.sty' is not loaded}%
    \renewcommand\transparent[1]{}%
  }%
  \providecommand\rotatebox[2]{#2}%
  \newcommand*\fsize{\dimexpr\f@size pt\relax}%
  \newcommand*\lineheight[1]{\fontsize{\fsize}{#1\fsize}\selectfont}%
  \ifx\svgwidth\undefined%
    \setlength{\unitlength}{287.62759782bp}%
    \ifx\svgscale\undefined%
      \relax%
    \else%
      \setlength{\unitlength}{\unitlength * \real{\svgscale}}%
    \fi%
  \else%
    \setlength{\unitlength}{\svgwidth}%
  \fi%
  \global\let\svgwidth\undefined%
  \global\let\svgscale\undefined%
  \makeatother%
  \begin{picture}(1,0.48143166)%
    \lineheight{1}%
    \setlength\tabcolsep{0pt}%
    \put(0,0){\includegraphics[width=\unitlength,page=1]{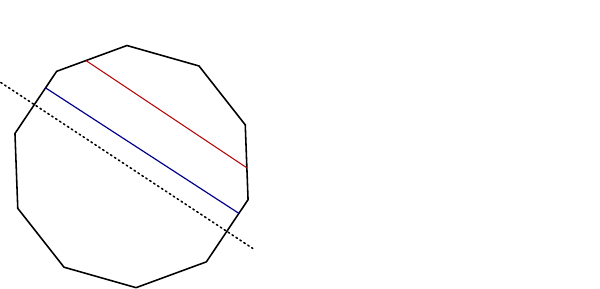}}%
    \put(0.31913042,0.39779549){\color[rgb]{0,0,0}\makebox(0,0)[lt]{\lineheight{1.25}\smash{\begin{tabular}[t]{l}$p$\end{tabular}}}}%
    \put(0,0){\includegraphics[width=\unitlength,page=2]{appendix.pdf}}%
    \put(0.81463689,0.425){\color[rgb]{0,0,0}\makebox(0,0)[lt]{\lineheight{1.25}\smash{\begin{tabular}[t]{l}$p$\end{tabular}}}}%
    \put(0,0){\includegraphics[width=\unitlength,page=3]{appendix.pdf}}%
    \put(0.25999025,0.3150197){\color[rgb]{0,0,0}\makebox(0,0)[lt]{\lineheight{1.25}\smash{\begin{tabular}[t]{l}$\partial\Omega_0^{\star}$\end{tabular}}}}%
    \put(0.24175867,0.23424755){\color[rgb]{0,0,0}\makebox(0,0)[lt]{\lineheight{1.25}\smash{\begin{tabular}[t]{l}$\partial\Omega^{\star}$\end{tabular}}}}%
    \put(0.77883634,0.3465859){\color[rgb]{0,0,0}\makebox(0,0)[lt]{\lineheight{1.25}\smash{\begin{tabular}[t]{l}$\partial\Omega_0^{\tiny\hexstar}$\end{tabular}}}}%
    \put(0.7765532,0.27273659){\color[rgb]{0,0,0}\makebox(0,0)[lt]{\lineheight{1.25}\smash{\begin{tabular}[t]{l}$\partial\Omega^{\tiny\hexstar}$\end{tabular}}}}%
  \end{picture}%
\endgroup%

    \caption{In the notation of Proposition \ref{mainapp}, we consider $\Om_0\subset D_{10}$ with $L_0=3s$. }
    \label{fig:app}
\end{figure}
}
\end{proof}

\end{document}